\documentclass[11pt]{article}
\usepackage[margin=1in]{geometry}
\usepackage{natbib}
\usepackage[symbol]{footmisc} 
\usepackage[hidelinks]{hyperref}
\usepackage{booktabs}
\usepackage{tabularx}
\usepackage{graphicx}%
\usepackage{eso-pic}
\usepackage{enumitem}
\usepackage{amsfonts,amsmath,amssymb,amsthm,mathtools}%
\usepackage[title]{appendix}%
\usepackage{lmodern}
\usepackage{caption} % Anonymized submission
\usepackage{authblk}

\usepackage{times}

\usepackage{tikz}
\usepackage{amsmath}
\usepackage{amssymb}
\usepackage{mathtools}
\usepackage{amsmath}
\usepackage{float}
\usepackage{algorithm}
\usepackage{algpseudocode}
\newtheorem{fact}{Fact}

\newcommand{\real}{\mathbb{R}}

\newcommand{\cX}{\mathcal{X}}

\newcommand*{\infn}[1]{\left\|{#1}\right\|_{\infty}}

\newcommand{\expec}{\mathbb{E}}

\newtheorem{assumption}{Assumption}
\newtheorem{theorem}{Theorem}
\newtheorem{lemma}{Lemma}

\date{}

\begin{document}
\title{\LARGE\bfseries
Optimal Oracle Complexity of Dual Halpern Iteration in Normed Spaces  
}

% \title{\LARGE\bfseries
% Dual Halpern Iteration in Normed Spaces:\\
% Oracle Complexities and Robust Error Bounds
% }

% \title{\LARGE\bfseries
% Dual Halpern Iteration is Optimal for Stochastic Fixed-Point Problems
% }
% \title{\bfseries Dual Halpern Iteration in General Normed Space:
% robust Error Bounds Under Uncertainty}
\author{}
\date{}% \author{Jongmin Lee}
\maketitle

\vspace{-3em}

\noindent
\begin{minipage}[t]{0.65\textwidth}
    \textbf{Jongmin Lee}\\
    \textit{ University of Pennsylvania}
\end{minipage}%
\hfill
\begin{minipage}[t]{0.33\textwidth}
\raggedleft
    \small  jongmlee@engineering.upenn.edu
\end{minipage}

\begin{abstract}
In this work, we study the convergence and error bounds of dual Halpern
iteration for nonexpansive operators in normed spaces. First, we show
that dual Halpern iteration exhibits the same optimal residual rate
as Halpern iteration in the deterministic setting through a direct
convergence analysis. With inexact operator evaluations, we also conduct
a convergence analysis in normed spaces and derive the robust error bound
of inexact dual Halpern iteration. Based on this robustness result, we study the oracle
complexity of stochastic dual Halpern iteration in
$(\real^d,\|\cdot\|_p)$ for $p\in(1,\infty]$, using an unbiased stochastic oracle
with bounded variance $\sigma^2$. Notably, for $p\in[2,\infty]$,
our method achieves an oracle complexity of
$O\left(\epsilon^{-1}+\sigma^2\epsilon^{-3}\right)$ in its dependence on $\epsilon$ and $\sigma$,
which matches the lower bound for the case $p=\infty$ without a logarithmic
gap in the dependence on $\epsilon$.
\end{abstract}

% \AddToShipoutPictureFG*{%
%   \AtPageLowerLeft{%
%     \put(0,40){\makebox[\paperwidth][c]{Institution 1}}%
%   }%
% }

% \vspace*{1cm}
% % \maketitle

% \title{\Large\bfseries
% Dual Halpern Iteration in General Normed Space: Robust Error Bounds Under Uncertainty
% }

% \maketitle

% \vspace{-1.5em}

% \noindent
% \begin{minipage}[t]{0.75\textwidth}
%     \textbf{Jongmin Lee}\\
%     \textit{University of Pennsylvania}
% \end{minipage}%
% \hfill
% \begin{minipage}[t]{0.23\textwidth}
%     \raggedleft
%     \small jongmlee@engineering.upenn.edu
% % \end{minipage}

% \maketitle
% \vspace{-3em}

% \noindent
% \begin{minipage}[t]{0.7\textwidth}
% \textbf{Jongmin Lee}\\
% \textit{University of Pennsylvania}
% \end{minipage}%
% \hfill
% \begin{minipage}[t]{0.28\textwidth}
% \raggedleft
% jongmlee@engineering.upenn.edu
% \end{minipage}

% \begin{center}
%     {\LARGE
%      Dual Halpern Iteration in General Normed Space:
%     Robust Error Bounds Under Uncertainty
%     \par}

%     \vspace{1.2em}

%     {\large Jongmin Lee \qquad\qquad\qquad\qquad\qquad\texttt{jongmlee@engineering.upenn.edu} \par}

%     \vspace{0.4em}
    
%     {\large\itshape
%     \!\!\!\!\!\!\!\!\!\!University of Pennsylvania
%     \par}
% \end{center}

% \vspace{2em}
% \begin{abstract}%
%   This is a great paper and it has a concise abstract.%
% \end{abstract}

\section{Introduction}
To solve the fixed-point problem associated with a nonexpansive operator $T:\cX\to\cX$,
\[
\underset{x\in\cX}{\text{find }} \,\,\, T(x)=x,
\]
% provides a general rubric of formulating problem in optimization, reinforcement
% learning, and machine learning.
Halpern iteration reduces the fixed-point residual
$\|Tx-x\|$ with optimal oracle complexity in the deterministic setting.
In the stochastic oracle setup, Halpern-based
methods further achieved near-optimal complexity for Euclidean spaces and certain non-Euclidean spaces
using recursive regularization techniques, but there still exists
a logarithmic gap to optimal stochastic oracle complexity.

On the other hand, \emph{dual Halpern iteration}
was recently introduced with its optimality in the deterministic setting established through
a new type of duality theorem, and it also achieved near-optimal stochastic oracle complexity
in Euclidean spaces without variance reduction techniques.
Despite these promising convergence results, two limitations remain
for ensuring its performance: (i) prior stochastic fixed-point results require an additional
mean-square nonexpansiveness assumption on the stochastic oracle, and (ii) 
prior analyses do not extend to general normed spaces whose
norms are not induced by an inner product.

\paragraph{Contributions.}
In this work, we establish optimal oracle complexity bounds for dual Halpern
iteration for nonexpansive operators in normed spaces. First, in the
deterministic setup, we obtain the optimal residual bound $O(1/K)$ through a direct
convergence analysis, where $K$ is the number of iterations
(Theorem~\ref{thm:dh}). With inexact operator evaluations,
i.e., $\widetilde T x=Tx+e$, we show that the error term in the bound
for dual Halpern iteration grows slowly as
$O(\log(K+1)\|e\|)$ (Theorem~\ref{thm:idh}), compared with the error bound
$O(K\|e\|)$ of inexact Halpern iteration.
Building on this robustness result, we analyze stochastic dual Halpern
iteration in $(\real^d,\|\cdot\|_p)$ for $p\in(1,\infty]$, using an
unbiased stochastic oracle with uniformly bounded variance $\sigma^2$.
In particular, for $p\in[2,\infty]$, stochastic dual Halpern iteration
yields an oracle complexity of
\[
O\left(
\frac{\kappa}{\epsilon}
+
\frac{\kappa(p-1)\sigma^2}{\epsilon^3}
\right)
\quad \text{for } p\in[2,\infty),
\qquad
O\left(
\frac{\kappa}{\epsilon}
+
\frac{\kappa\sigma^2\log(2d)}{\epsilon^3}
\right)
\quad \text{for } p=\infty,
\]
where $\kappa=\|x^0-x^\star\|_p$ for a fixed point $x^\star$
(Theorem~\ref{thm:sdh}).
Notably, for $p=\infty$, our oracle complexity is optimal up to the
logarithmic dimension factor, and this improves upon the prior state-of-the-art oracle
complexity results by logarithmic factors in $1/\epsilon$ \cite{diakonikolas2026oracle}.

\subsection{Prior works}
\paragraph{Oracle complexity of Halpern and dual Halpern iteration}

Halpern iteration \citep{halpern1967fixed} has been extensively studied in the literature on minimax optimization and fixed-point problems, particularly from the perspective of oracle complexity.
Its $O(1/\epsilon)$ oracle complexity was established by
\cite{sabach2017first} for appropriate anchoring coefficients, with
exact optimality in Hilbert spaces established by
\cite{Lieder2021halpern, park2022exact}. Improved bounds in normed
spaces were studied by \cite{contreras2022optimal, bravo2026minimax}.
In stochastic settings, variance-reduced Halpern methods have been
studied in Euclidean spaces by \cite{cai2022stochastic, cai2024variance},
including the finite-sum setting. Using recursive regularization,
\cite{chen2024near} achieves near-optimal
$\widetilde{O}(1/\epsilon^{2})$ oracle complexity for stochastic
monotone Lipschitz root-finding in Euclidean spaces.
For finite-dimensional normed spaces,
\cite{bravo2026stochastic, diakonikolas2026solving} establish
$\widetilde{O}(1/\epsilon^{5})$ oracle complexity using stochastic
Halpern iteration with minibatching and with recursive variance
reduction, respectively. Recently, \cite{diakonikolas2026oracle}
achieved $\widetilde{O}(1/\epsilon^{3})$ oracle complexity for
nonexpansive self-maps  for $p\in[2,\infty]$
through recursive anchoring combined with median-of-means estimation,
with near-optimality established for $\ell_\infty^d$.

Dual Halpern iteration was first introduced in Hilbert spaces through
the H-duality theorem \cite{yoon2024optimal}. The H-duality theorem
guarantees the same residual bound for H-dual fixed-point algorithms
when the proofs have the prescribed Lyapunov structure. So dual Halpern iteration
is optimal, as Halpern iteration is, and this H-duality
framework was further studied in
\cite{yoon2026toward, shu2026unified}.
Similarly, in general normed spaces, the optimal
$O(1/\epsilon)$ oracle complexity of dual Halpern iteration
was first established by the M-duality theorem \cite{lee2026m}. Compared to H-duality, M-duality
establishes equal normalized worst-case residual bounds for
corresponding M-dual fixed-point algorithms under appropriate
coefficient assumptions.
For stochastic fixed-point problems in finite-dimensional Euclidean
spaces, \cite{yoon2026direct} established
$O(1/\epsilon^3)$ oracle complexity using stochastic dual Halpern
iteration without recursive variance reduction or recursive
regularization.
\cite{yoon2026dual} improved this to a near-optimal
$\widetilde{O}(1/\epsilon^2)$ oracle complexity by optimizing the
minibatch schedule. However, these stochastic fixed-point analyses
of dual Halpern iteration require an additional mean-square
nonexpansiveness assumption on the stochastic oracle, which is not
required in the other near-optimal results
\cite{chen2024near, diakonikolas2026oracle}.

Regarding oracle complexity lower bounds, in the deterministic
setting, \cite{contreras2022optimal} establishes an
$\Omega(1/\epsilon)$ lower bound in $\ell_\infty(\mathbb{N})$
for general Mann iterations, while \cite{lee2024accelerating}
establishes a lower bound for methods satisfying
a span condition in finite-dimensional spaces.
In Euclidean spaces, \cite{diakonikolas2022potential} established
$\Omega(1/\epsilon)$ lower bounds within their 1-SCLI framework, while
\cite{park2022exact} established sharp lower bounds for
deterministic fixed-point algorithms.
In the stochastic setting, \cite{chen2024near} established an
$\Omega(1/\epsilon^2)$ lower bound for
smooth convex-concave minimax problems in Euclidean spaces.
For stochastic fixed-point problems,
\cite{bravo2026stochastic} established an
$\Omega(1/\epsilon^3)$ lower bound in $\ell_1^d$ under a span condition, with oracle variance bounded in the Euclidean norm, and
\cite{diakonikolas2026oracle} established an
$\Omega(1/\epsilon^3)$ lower bound in $\ell_\infty^d$ with sufficiently high dimension for
arbitrary randomized algorithms with oracle variance bounded in the supremum norm.
\subsection{Notation}
Write $(\cX,\|\cdot\|)$ for a normed space. An operator $T: \cX \rightarrow \cX$ is nonexpansive if $\|Tx-Ty\| \le \|x-y\|$ for all $x,y \in \cX$.
We denote the $\ell_p$ norms in $\mathbb{R}^d$ by $\|\cdot\|_p$ for $p\in[1,\infty]$, i.e.,
for $x = (x_1,\ldots,x_d) \in \mathbb{R}^d$:
$\|x\|_p = \left(\sum_{i=1}^d |x_i|^p\right)^{1/p}$
for $p \in [1,\infty)$, and
$\|x\|_\infty = \max_{1\le i\le d}|x_i|$.
We say $x^\star$ is a fixed point of the operator $T$ if $Tx^\star=x^\star$, and denote
$\|x^0-x^\star\|=\kappa$ for a given $(\cX,\|\cdot\|)$.

\section{Dual Halpern iteration}
\begin{assumption}\label{assum_fixp}
$T:\cX\to\cX$ is a nonexpansive operator on a normed space
$(\cX,\|\cdot\|)$ with a fixed point $x^\star$.
\end{assumption}

The \emph{Dual Halpern iteration} is
\begin{align*}
    x^{k+1}
    &=m_{K-k}Tx^k
    +\sum_{i=0}^{k-1}(m_{K-i}-m_{K-i-1})Tx^i
    +(1-m_K)x^0,
    \qquad 0\le k \le K-1,
\end{align*}
and equivalently,
\[
x^{k+1}=x^k+m_{K-k}(Tx^k-Tx^{k-1}),
\quad 0\le k \le K-1,
\]
where $Tx^{-1}=x^0$ and $K$ is the total number of iterations.

Dual Halpern iteration was first introduced in Hilbert spaces by a duality theorem \cite{yoon2024optimal}, and its optimality was also established in both Hilbert and normed spaces \cite{yoon2024optimal, lee2026m}. Roughly speaking, the duality theorems state that a fixed-point iteration and its corresponding ``dual'' iteration exhibit the same residual bound under appropriate assumptions. Thus the convergence rate and optimality of dual Halpern iteration follow directly from those of Halpern iteration \cite{Lieder2021halpern, contreras2022optimal}.

Here, we directly analyze the convergence of dual Halpern iteration under Assumption~\ref{assum_fixp} without relying on a duality theorem.

\begin{theorem}\label{thm:dh}
Assume Assumption~\ref{assum_fixp}. Let $K\ge1$ and $0=m_0\le m_k\le m_{k+1}\le1$
for $0\le k<K$. The Dual Halpern iteration exhibits the rate
\[
\|Tx^K-x^K\|
\le
2\left(
1-m_K+\sum_{i=1}^K(m_i-m_{i-1})\prod_{j=i}^K m_j
\right)\kappa,
\]
where $\kappa= \|x^0-x^\star\|$. Specifically, if $m_k=k/(k+2)$, the Dual Halpern iteration
exhibits the rate
\[
\|Tx^K-x^K\|\le\frac{8\kappa}{K+4}.
\]
\end{theorem}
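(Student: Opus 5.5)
The plan is to use only nonexpansiveness and the fixed point $x^\star$. I would first show that all iterates stay in the ball of radius $\kappa$ around $x^\star$. Then I would rewrite the residual $Tx^K-x^K$ as an explicit combination of the successive differences $D_k:=Tx^k-Tx^{k-1}$ (with $Tx^{-1}=x^0$), and bound $\|D_k\|$ by products of the $m_j$.

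\emph{Step 1 (iterates stay in the ball).} Unrolling the recursion gives
\[
x^K=\sum_{i=0}^{K-1}c_i\,Tx^i+(1-m_K)x^0,\qquad c_{K-1}=m_1,\quad c_i=m_{K-i}-m_{K-i-1}\ (i<K-1).
\]
Under the monotonicity assumption on $m_k$ the coefficients are nonnegative and sum to $1$, and the same holds for every $x^k$. Since $\|Tx^i-x^\star\|\le\|x^i-x^\star\|$, induction on $k$ gives $\|x^k-x^\star\|\le\kappa$ for all $k$. Hence $\|Tx^k-x^0\|\le 2\kappa$ and $\|D_0\|\le 2\kappa$.

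\emph{Step 2 (contraction of differences).} From $x^{k+1}-x^k=m_{K-k}D_k$ and nonexpansiveness,
\[
\|D_{k+1}\|\le\|x^{k+1}-x^k\|\le m_{K-k}\|D_k\|,
\qquad\text{so}\qquad
\|D_k\|\le 2\kappa\prod_{j=K-k+1}^{K}m_j .
\]

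\emph{Step 3 (residual decomposition).} I would write
\[
Tx^K-x^K=(1-m_K)(Tx^K-x^0)+\sum_{i=0}^{K-1}c_i\,(Tx^K-Tx^i).
\]
The first term is at most $2(1-m_K)\kappa$ by Step 1. For the sum I would use $\|Tx^K-Tx^i\|=\bigl\|\sum_{k=i+1}^{K}D_k\bigr\|$, or alternatively $\le\|x^K-x^i\|=\bigl\|\sum_{k=i}^{K-1}m_{K-k}D_k\bigr\|$. Then I would insert the product bounds from Step 2, exchange the order of summation, and reindex with $i\mapsto K-i$. Writing $P_i=\prod_{j=i}^K m_j$, the aim is to collapse the double sum to exactly $2\kappa\sum_{i=1}^K(m_i-m_{i-1})P_i$.

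\emph{Main obstacle: this bookkeeping.} Crude choices already fail. Expanding $Tx^K-x^K=\sum_{k=0}^K(1-m_{K-k})D_k$ and using the triangle inequality termwise telescopes to the trivial bound $2\kappa$. So the anchor term must be peeled off first, as in Step 3. Then one must pick, for each $i$, whichever of the two representations of $Tx^K-Tx^i$ yields weights $m_i-m_{i-1}$ (rather than $m_K-m_{i-1}$) after summation by parts. I would first try the $\sum_{k>i}D_k$ representation. Then I would check whether the Abel-summed coefficient of $P_i$ becomes $c_{K-i}$, using $m_iP_{i+1}=P_i$.

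If direct triangle inequalities lose too much, the fallback is a Lyapunov-type quantity, for instance $V_k=\|D_k\|/\prod_{j=K-k+1}^K m_j$ combined with the anchored distance. This would mirror the structure that the duality theorems of \cite{yoon2024optimal,lee2026m} transfer from Halpern iteration.

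\emph{Step 4 (specialization).} For $m_k=k/(k+2)$, telescoping gives $P_i=\frac{i(i+1)}{(K+1)(K+2)}$ and $m_i-m_{i-1}=\frac{2}{(i+1)(i+2)}$. Hence
\[
\sum_{i=1}^K(m_i-m_{i-1})P_i=\frac{2}{(K+1)(K+2)}\sum_{i=1}^K\frac{i}{i+2},
\]
and $1-m_K=\frac{2}{K+2}$. Summing and simplifying should give the stated $8\kappa/(K+4)$. This needs an exact evaluation of $\sum i/(i+2)=K-2(H_{K+2}-\tfrac32)$, not just the crude estimate $\le K$, which only gives roughly $8/(K+2)$.
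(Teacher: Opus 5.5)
Your Step 3 has a genuine gap, and it is not just bookkeeping. With only the Step 2 bounds $\|D_k\|\le 2\kappa Q_k$, where $Q_k:=\prod_{j=K-k+1}^K m_j$, the triangle inequality gives $\|Tx^K-Tx^i\|\le 2\kappa\sum_{k=i+1}^K Q_k$. Your second representation gives exactly the same bound, because $m_{K-k}Q_k=Q_{k+1}$, so choosing between the two representations changes nothing. Exchange the sums using $\sum_{i=0}^{k-1}c_i=m_K-m_{K-k}$ and reindex by $i=K-k+1$. This yields
\[
\|Tx^K-x^K\|\le 2\kappa\Bigl(1-m_K+\sum_{i=1}^K(m_K-m_{i-1})P_i\Bigr),
\]
which carries precisely the weights $m_K-m_{i-1}$ you wanted to avoid. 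For $m_k=k/(k+2)$ the right-hand side equals $\frac{2(K+6)}{3(K+2)}\kappa$, which tends to $\tfrac23\kappa$ rather than $0$.

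The loss is structural. For $k\ll K$ the factors $m_{K-k}$ are close to $1$, so the Step 2 bound on $\|D_k\|$ stays near $2\kappa$. Summing such bounds along the path from $x^i$ to $x^K$ makes $\|x^K-x^i\|$ look like order $K\kappa$ for small $i$, when it is really at most $2\kappa$. Your Lyapunov fallback is not specified enough to fill this.

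The missing idea, which is the paper's key lemma, is a bound on non-consecutive differences that does not accumulate along the path:
\[
\|x^k-x^j\|\le 2\kappa\prod_{\ell=K-j}^K m_\ell \quad\text{for all } 0\le j<k\le K,
\]
uniformly in $k$. It is proved by induction on $j$. The base case $j=0$ is $\|x^k-x^0\|\le 2m_K\kappa$, which follows from Step 1 and the convex-combination form. For the inductive step, write
\[
x^k-x^j=m_{K-k+1}(Tx^{k-1}-Tx^{j-1})+\sum_{i=j}^{k-2}(m_{K-i}-m_{K-i-1})(Tx^i-Tx^{j-1}).
\]
This is a nonnegative combination, of total weight $m_{K-j}$, of differences all anchored at the same point $Tx^{j-1}$. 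Nonexpansiveness and the hypothesis $\|x^i-x^{j-1}\|\le 2\kappa\prod_{\ell=K-j+1}^K m_\ell$ then close the induction. Now insert $\|Tx^K-Tx^i\|\le\|x^K-x^i\|\le 2\kappa P_{K-i}$ into your Step 3 decomposition. After $i\mapsto K-i$ this gives the target weights $(m_i-m_{i-1})P_i$ directly, with no summation by parts.

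Your Step 4 is a valid alternative to the paper's recursion $B_r=(1-m_r)^2+m_rB_{r-1}$ for the bracketed quantity. Your closed form reduces the claim to $H_{K+2}\ge 3(K+2)/(K+4)$. This is true by a short induction, but you still need to verify it.
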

\begin{proof}
First, we prove $\|Tx^{k+1}-x^\star\| \le \|x^0-x^\star\|$ by induction.  

$\|Tx^0-x^\star\|\le \|x^0-x^\star\|$ by the nonexpansiveness of the operator $T$. Then, we have
\begin{align*}
    \|Tx^{k+1}-x^\star\|
&\le \|x^{k+1}-x^\star\|\\
&=\left\|
m_{K-k}Tx^k
+\sum_{i=0}^{k-1}(m_{K-i}-m_{K-i-1})Tx^i
+(1-m_K)x^0-x^\star
\right\|\\
&=
\left\|
m_{K-k}(Tx^k-x^\star)
+\sum_{i=0}^{k-1}(m_{K-i}-m_{K-i-1})(Tx^i-x^\star)
+(1-m_K)(x^0-x^\star)
\right\|\\
&\le m_{K-k}\left\|Tx^k-x^\star\right\|
+\sum_{i=0}^{k-1}(m_{K-i}-m_{K-i-1})\left\|Tx^i-x^\star\right\|
+(1-m_K)\left\|x^0-x^\star\right\|\\
&\le \left\|x^0-x^\star\right\|,
\end{align*}
where the first inequality follows from nonexpansiveness, the second is from the properties of the norm, and the last inequality comes from induction.

Then, we also have 
\[\|Tx^{k+1}-x^0\|\le \|Tx^{k+1}-x^\star\|+\|x^\star-x^0\|
\le \|x^{k+1}-x^\star\|+\|x^\star-x^0\|
\le 2\|x^\star-x^0\|\].

If $\kappa=0$, the claim is immediate. Otherwise, normalize $\kappa=1$ by scaling the norm without loss of generality.
Now, we prove the following key lemma.
\begin{lemma}\label{lem_1}
For $0\le j+1< k+1\le K$,
\[
\|x^{k+1}-x^{j+1}\|
\le2\prod_{i=K-j-1}^K m_i.
\]
\end{lemma}

\begin{proof}
We prove the claim by induction on $j$. First, for $j=-1$,
\begin{align*}
\|x^{k+1}-x^0\|
&=
\left\|
m_{K-k}Tx^k
+\sum_{i=0}^{k-1}(m_{K-i}-m_{K-i-1})Tx^i
+(1-m_K)x^0-x^0
\right\|\\
&=
\left\|
m_{K-k}(Tx^k-x^0)
+\sum_{i=0}^{k-1}(m_{K-i}-m_{K-i-1})(Tx^i-x^0)
\right\|\\
&\le
2m_{K-k}
+2\sum_{i=0}^{k-1}(m_{K-i}-m_{K-i-1})\\
&=2m_K.
\end{align*}
Next, we have
\begin{align*}
\|x^{k+1}-x^{j+1}\|
&=
\Biggl\|
m_{K-k}Tx^k
+\sum_{i=0}^{k-1}(m_{K-i}-m_{K-i-1})Tx^i
+(1-m_K)x^0\\
&\qquad
-\left(
m_{K-j}Tx^j
+\sum_{i=0}^{j-1}(m_{K-i}-m_{K-i-1})Tx^i
+(1-m_K)x^0
\right)
\Biggr\|\\
&=
\left\|
m_{K-k}Tx^k
+\sum_{i=j+1}^{k-1}(m_{K-i}-m_{K-i-1})Tx^i
-m_{K-j-1}Tx^j
\right\|\\
&=
\left\|
m_{K-k}(Tx^k-Tx^j)
+\sum_{i=j+1}^{k-1}
(m_{K-i}-m_{K-i-1})(Tx^i-Tx^j)
\right\|\\
&\le
m_{K-k}\|x^k-x^j\|
+\sum_{i=j+1}^{k-1}
(m_{K-i}-m_{K-i-1})\|x^i-x^j\|\\
&\le
2m_{K-k}\prod_{\ell=K-j}^K m_\ell
+\sum_{i=j+1}^{k-1}
(m_{K-i}-m_{K-i-1})2\prod_{\ell=K-j}^K m_\ell\\
&=
2\prod_{\ell=K-j-1}^K m_\ell,
\end{align*}
where the first inequality follows from the triangle inequality
and nonexpansiveness and the second is from the induction hypothesis.
\end{proof}

Therefore,
\begin{align*}
&\|Tx^K-x^K\|\\
&=
\Biggl\|
m_1(Tx^{K-1}-Tx^K)
+\sum_{i=0}^{K-2}
(m_{K-i}-m_{K-i-1})(Tx^i-Tx^K)
+(1-m_K)(x^0-Tx^K)
\Biggr\|\\
&\le
m_1\|Tx^{K-1}-Tx^K\|
+\sum_{i=0}^{K-2}
(m_{K-i}-m_{K-i-1})\|Tx^i-Tx^K\|
+(1-m_K)\|x^0-Tx^K\|\\
&\le
m_1\|x^{K-1}-x^K\|
+\sum_{i=0}^{K-2}
(m_{K-i}-m_{K-i-1})\|x^i-x^K\|
+2(1-m_K)\\
&\le
2(1-m_K)
+2m_1\prod_{j=1}^K m_j
+2\sum_{i=0}^{K-2}
(m_{K-i}-m_{K-i-1})\prod_{j=K-i}^K m_j\\
&=
2\left(1-m_K+\sum_{i=1}^K(m_i-m_{i-1})\prod_{j=i}^K m_j\right),
\end{align*}
where the first inequality follows from the triangle inequality,
the second is from nonexpansiveness and the normalization,
and the third inequality comes from the previous lemma.

Lastly, let $m_r=r/(r+2)$ and define
\[
B_r
=
1-m_r+\sum_{i=1}^r(m_i-m_{i-1})\prod_{j=i}^r m_j.
\]
Then $B_0=1$ and, for $r\ge1$,
\begin{align*}
B_r
&=
1-m_r
+m_r\bigl(m_r-m_{r-1}+B_{r-1}-(1-m_{r-1})\bigr)\\
&=(1-m_r)^2+m_rB_{r-1}.
\end{align*}
We prove $B_r\le4/(r+4)$ by induction. If
$B_{r-1}\le4/(r+3)$, then
\[
B_r
\le
\frac{4}{(r+2)^2}+\frac{4r}{(r+2)(r+3)}
\le\frac{4}{r+4}.
\]
% where the last inequality follows from
% \[
% \frac{4}{r+4}
% -\frac{4r}{(r+2)(r+3)}
% -\frac{4}{(r+2)^2}
% =
% \frac{4r}{(r+2)^2(r+3)(r+4)}
% \ge0.
% \]
Thus $\|Tx^K-x^K\|\le2B_K\le8/(K+4)$.
Restoring the factor $\kappa$ proves the claimed bounds.
\end{proof}
% We note that while \cite{lee2026m} showed that converenge rate of Dual Halpern iteration under bounded assumption, and this result also transformed into our setup, by dimaeter of domain as $\|x^0-x^\star\|$ and proving $x^n- x^\star \le x^0- x^\star $, the did not directly analyze the Dual Halperniteration. Here we prove its rate by direct analysis, and this will be bases of analysis of inexact dual halpern iteration. Also, we notify that unlike Hilbert spaces, Halpern iteration and Dual Halpern iteraiton is not exact optimal by exactly mathcing lower bound. Notably, \cite{lee2026m} introduced algorithm which showed faster worst case residual bound than any Halpern or dual Halpern type mtehod.

\subsection{Comparison with the residual bound of Halpern iteration}

The \emph{Halpern iteration} \cite{halpern1967fixed} is
\[
x^{k+1}
=
(1-m_{k+1})x^0+m_{k+1}Tx^k,
\qquad k=0,\ldots,K-1.
\]
% and the following convergence result is known.
\begin{fact}[\cite{sabach2017first, contreras2022optimal}]\label{fact_1}
Assume Assumption~\ref{assum_fixp}. Let $K\ge1$ and $0=m_0\le m_k\le m_{k+1}\le1$
for $0\le k<K$. The Halpern iteration exhibits the rate
\[
\|Tx^K-x^K\|
\le
2\left(
1-m_K+\sum_{i=1}^K(m_i-m_{i-1})\prod_{j=i}^K m_j
\right)\kappa.
\]
Specifically, if $m_k=k/(k+2)$, the Halpern iteration
exhibits the rate
\[
\|Tx^K-x^K\|\le\frac{8\kappa}{K+4}.
\]
\end{fact}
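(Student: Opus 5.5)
We prove Fact~\ref{fact_1} for Halpern iteration by mirroring the direct analysis of Theorem~\ref{thm:dh}; the anchoring weights now appear in forward rather than reversed order. By scaling the norm we normalize $\kappa=1$, since the case $\kappa=0$ is trivial.

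\textbf{Step 1: boundedness.} We show $\|x^k-x^\star\|\le 1$ by induction. The step is
\[
x^{k+1}-x^\star=(1-m_{k+1})(x^0-x^\star)+m_{k+1}(Tx^k-x^\star),
\]
combined with nonexpansiveness. It follows that $\|Tx^k-x^0\|\le\|Tx^k-x^\star\|+\|x^\star-x^0\|\le 2$ for every $k$.

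\textbf{Step 2: differences of consecutive iterates.} Subtracting consecutive updates gives
\[
x^{k+1}-x^k=(m_{k+1}-m_k)(Tx^{k-1}-x^0)+m_{k+1}(Tx^k-Tx^{k-1}).
\]
Hence, with $d_k=\|x^{k}-x^{k-1}\|$,
\[
d_{k+1}\le 2(m_{k+1}-m_k)+m_{k+1}d_k,\qquad d_1=\|x^1-x^0\|=m_1\|Tx^0-x^0\|\le 2m_1 .
\]
Set $Tx^{-1}:=x^0$ so that the recursion also holds at $k=0$ with $d_0=0$. Unrolling it yields
\[
d_K\le 2\sum_{i=1}^K (m_i-m_{i-1})\prod_{j=i+1}^K m_j .
\]

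\textbf{Step 3: residual.} We write
\[
Tx^K-x^K=(1-m_K)(Tx^K-x^0)+m_K(Tx^K-Tx^{K-1}).
\]
Therefore $\|Tx^K-x^K\|\le 2(1-m_K)+m_K d_K$. Substituting Step~2 gives exactly
\[
2\Big(1-m_K+\sum_{i=1}^K(m_i-m_{i-1})\prod_{j=i}^K m_j\Big),
\]
because multiplying by $m_K$ extends each product to start at $j=i$.

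\textbf{Step 4: the specific schedule.} The bracket is the same quantity $B_K$ as in Theorem~\ref{thm:dh}, so for $m_k=k/(k+2)$ we reuse the recursion $B_r=(1-m_r)^2+m_rB_{r-1}$ and the bound $B_r\le 4/(r+4)$ shown there. This gives $8\kappa/(K+4)$.

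The main point needing care is Step~2: the anchoring term enters with coefficient $m_{k+1}-m_k$, and one must keep the bound $\|Tx^{k-1}-x^0\|\le2$ from Step~1 valid uniformly in $k$. Apart from that, the argument is a straightforward telescoping of the recursion.
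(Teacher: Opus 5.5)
Your proof is correct. It is essentially the standard consecutive-difference argument of \cite{sabach2017first, contreras2022optimal}, which the paper invokes by citation without reproving, combined with the paper's own recursion $B_r=(1-m_r)^2+m_rB_{r-1}$ from Theorem~\ref{thm:dh}. It is not really a mirror of Lemma~\ref{lem_1}: the paper explicitly contrasts that lemma's $k$-independent bound on $\|x^k-x^j\|$ with this Halpern-style one-step recursion.

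One cosmetic point. The convention $Tx^{-1}:=x^0$ does not by itself make your Step~2 derivation valid at $k=0$, since $\|Tx^0-Tx^{-1}\|=\|Tx^0-x^0\|$ cannot be bounded by $d_0=0$ via nonexpansiveness. However, your direct estimate $d_1\le 2m_1=2(m_1-m_0)+m_1d_0$ already supplies the base case, so the unrolled bound stands.
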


As Theorem~\ref{thm:dh} and Fact~\ref{fact_1} show, with $m_k=k/(k+2)$,
Halpern iteration and dual Halpern iteration exhibit the same optimal residual bound under Assumption~\ref{assum_fixp}, matching the lower bound
up to constants \cite{contreras2022optimal, lee2024accelerating}.
But we note that the convergence analyses of the two iterations are seemingly different.
In particular, in Lemma~\ref{lem_1}, the structure of the Dual Halpern
iteration gives an upper bound on $\|x^k-x^i\|$ that is independent of $k$ for fixed $i$ and $K$,
in contrast to the analysis of Halpern iteration in \cite{contreras2022optimal}.

Lastly, we note that  Halpern iteration and  dual Halpern iteration are not exactly optimal in general normed spaces, while they are exactly optimal in Hilbert spaces by exactly matching the lower bound with appropriate anchoring coefficients \cite{Lieder2021halpern, park2022exact}. Specifically, \cite{lee2026m} introduced a new fixed-point iteration which showed a smaller worst-case residual bound than Halpern or dual Halpern type methods.

\section{Inexact Dual Halpern iteration}
When the operator values $Tx$ can only be approximated by
$\widetilde{T}x=Tx+e$, one is naturally led to consider
the inexact iteration.

Define the \emph{Inexact Dual Halpern iteration} as
\begin{align*}
x^{k+1}
&=x^k+m_{K-k}
  (\widetilde{T}x^k-\widetilde{T}x^{k-1}),
\end{align*}
for $k=0,\ldots,K-1$, where
$\widetilde{T}x^k=Tx^k+e_k$,
$\widetilde{T}x^{-1}=x^0$, and $K$ is the total number of iterations.

We now introduce our main result, which demonstrates the robust error bound of dual Halpern iteration.
\begin{theorem}\label{thm:idh}
Assume Assumption~\ref{assum_fixp}.
Let $0=m_0\le m_1\le\cdots\le m_K\le1$.
Then the Inexact Dual Halpern iteration exhibits the error bound
\[
\|Tx^K-x^K\|
\le
2(1-m_K+m_KW_K)\kappa
+2\sum_{i=0}^{K-1}W_{K-i}\|e_i\|,
\]
where
\[
W_0=0,
\qquad
W_i=m_i-m_{i-1}+m_{i-1}W_{i-1},
\quad i=1,\ldots,K.
\]
Specifically, if $m_k=k/(k+2)$, then
\[
\|Tx^K-x^K\|
\le
\frac{8\kappa}{K+4}
+4\sum_{i=0}^{K-1}\frac{\|e_i\|}{K-i+2}.
\]
\end{theorem}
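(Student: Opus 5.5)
The plan is to mirror the proof of Theorem~\ref{thm:dh} and carry the errors along linearly. First I would unroll the inexact recursion into its anchored form
\[
x^{k+1}=m_{K-k}\widetilde Tx^k+\sum_{i=0}^{k-1}(m_{K-i}-m_{K-i-1})\widetilde Tx^i+(1-m_K)x^0 .
\]
This identity holds because the telescoping argument is purely algebraic. After normalizing $\kappa=1$, I would write $\widetilde Tx^i=Tx^i+e_i$. In every difference the errors then enter additively with the same convex-type weights, and applying the triangle inequality leaves the deterministic terms plus explicit error sums.

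The key step is an inexact analogue of Lemma~\ref{lem_1}. I would aim for the bound
\[
\|x^{k+1}-x^{j+1}\|\le 2\prod_{\ell=K-j-1}^K m_\ell+D_{j},
\]
with $D_j$ independent of $k$. To derive it, I would write $x^{k+1}-x^{j+1}$ as $m_{K-k}(\widetilde Tx^k-\widetilde Tx^j)+\sum_{i=j+1}^{k-1}(m_{K-i}-m_{K-i-1})(\widetilde Tx^i-\widetilde Tx^j)$. The total weight here is $m_{K-j-1}$, so the induction gives $D_j\le m_{K-j-1}\bigl(D_{j-1}+2\max\|e\|\bigr)$. To track the errors individually, I would bound $\|e_i-e_j\|\le\|e_i\|+\|e_j\|$. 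The $e_j$ term collects total weight $m_{K-j-1}$. Each $e_i$ with $i>j$ has weight at most $m_{K-i}-m_{K-i-1}$, or $m_{K-k}$ when $i=k$.

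The main obstacle is that $e_k$ with $i=k$ carries the large weight $m_{K-k}$, which would destroy the $k$-independence. I would handle this by not folding $e_k$ into the lemma. Instead, in the final residual computation I expand $\|Tx^K-x^K\|$ exactly as in Theorem~\ref{thm:dh}, with $\widetilde T$ in place of $T$. In the final expansion the last index appears with weight $m_1$. The anchor term $(1-m_K)\|x^0-Tx^K\|$ still gives $2(1-m_K)$, since the bound $\|Tx^{k}-x^\star\|\le 1+(\text{error mass})$ must be re-proved by the same convex-combination induction. I would then gather, for each $e_i$, the total coefficient. It is a product-weighted sum of increments $m_\ell-m_{\ell-1}$ times products $\prod m$, which is exactly the recursion $W_i=m_i-m_{i-1}+m_{i-1}W_{i-1}$ evaluated at index $K-i$.

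For the deterministic part, I would check that $1-m_K+\sum_{i}(m_i-m_{i-1})\prod_{j=i}^K m_j=1-m_K+m_KW_K$. Unrolling the recursion for $W_K$ gives this identity directly. The expected bookkeeping difficulty is confirming the factor $2$ on each error coefficient, one copy from $e_i$ appearing in $\widetilde Tx^i$ and one from its propagation. If this is delicate, I would prove the coefficient bound $\le 2W_{K-i}$ by a separate induction on $K-i$.

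Finally, for $m_k=k/(k+2)$, the first term is $2B_K\le 8/(K+4)$ from Theorem~\ref{thm:dh}. For the second term I show $W_r\le 2/(r+2)$ by induction:
\[
\frac{2}{(r+1)(r+2)}+\frac{r-1}{r+1}\cdot\frac{2}{r+1}\le\frac{2}{r+2},
\]
which holds since $(r+2)+(r-1)(r+2)\le (r+1)^2$, i.e.\ $r^2+2r\le r^2+2r+1$. This gives $2W_{K-i}\le 4/(K-i+2)$.
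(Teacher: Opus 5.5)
Your plan has a genuine gap at its central step. You need an inexact analogue of Lemma~\ref{lem_1} whose error part $D_j$ is independent of $k$, and no such bound is available while you track the $\|e_i\|$ individually.

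The difficulty is not confined to $e_k$. The base level already fails: $\|x^{k+1}-x^0\|$ is controlled through $\|Tx^i-x^0\|\le\|x^i-x^\star\|+\kappa$, and the error mass in $\|x^i-x^\star\|$ grows with $i$. The iterates really do drift. Take $T=I$, $x^\star=x^0$, $e_k\equiv e$ and $m_r=r/(r+2)$. Then $x^{k+1}-x^k=m_{K-k}(x^k-x^{k-1})$ for $k\ge1$, and $x^K-x^0=\tfrac{K}{3}e$. For the same reason, the anchor term $(1-m_K)\|x^0-Tx^K\|$ is not $2(1-m_K)\kappa$ plus something small; it carries this whole drift.

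You also cannot defer the heavy weight $m_{K-k}$ on $e_k$ to the final expansion. The lemma must bound the true distance $\|x^{k+1}-x^{j+1}\|$, and it is invoked for every $k+1<K$ inside its own induction. The weight therefore resurfaces as $m_{K-k}m_{K-k+1}\|e_{k-1}\|$, and so on down the diagonal.

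If you force $k$-independence by taking a supremum over $k$, there are two outcomes. Either you only get a bound in terms of $\max_i\|e_i\|$, which is not the stated weighted bound. Or you must charge weight $m_{K-i}$ to every $e_i$ with $i>j$ at every level $j$. After propagation this gives the middle errors coefficients of order one, instead of $W_{K-i}\le 2/(K-i+2)$. So the step where you gather the total coefficient and assert it equals $2W_{K-i}$ is exactly where the argument breaks, and nothing in the plan supports it.

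The missing idea is to carry the $k$-dependent error explicitly and let it cancel at the end.
\begin{itemize}
\item With $\epsilon_i=\|e_i\|$ and $\Delta_i=m_i-m_{i-1}$, define $s_0=0$ and $s_{k+1}=m_{K-k}(s_k+\epsilon_k)+\sum_{i<k}\Delta_{K-i}(s_i+\epsilon_i)$. These are the same weights as in the anchored form.
\item This gives $\|x^k-x^\star\|\le\kappa+s_k$ and $\|x^k-x^0\|\le 2m_K\kappa+s_k$.
\item Prove by induction on $j$ that $\|x^k-x^j\|\le s_k-s_j+h_j$, where $h_0=2m_K\kappa$ and $h_j=m_{K-j}(h_{j-1}+2\epsilon_{j-1})$. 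The induction closes because $s_k-s_j$ obeys the same weighted recursion, and that recursion absorbs precisely the $k$-dependent terms that worried you.
\item In the final residual, $\sum_{i=0}^{K-1}\Delta_{K-i}=m_K$ and $\Delta_1=m_1$ give $(1-m_K)s_K+\sum_i\Delta_{K-i}(s_K-s_i)=\sum_i\Delta_{K-i}\epsilon_i$. The large drift terms are thus cancelled, not bounded.
\item What remains is $2\kappa(1-m_K)+\sum_i\Delta_{K-i}h_i+2\sum_i\Delta_{K-i}\epsilon_i$. Unrolling $h_i$ yields exactly $2\kappa(1-m_K+m_KW_K)+2\sum_r W_{K-r}\epsilon_r$, including the factor $2$.
\end{itemize}

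Your identity for the deterministic part and your treatment of $m_k=k/(k+2)$ are fine. One harmless slip: clearing denominators in the $W_r$ induction gives $(r+1)+(r-1)(r+2)\le(r+1)^2$, not $(r+2)+\cdots$.
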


\begin{proof}
Let $\Delta_i=m_i-m_{i-1}$ for $i=1,\ldots,K$.
Then we have
\begin{align*}
x^{k+1}
&=
m_{K-k}(Tx^k+e_k)
+\sum_{i=0}^{k-1}\Delta_{K-i}(Tx^i+e_i)
+(1-m_K)x^0.
\end{align*}

Let $\epsilon_i=\|e_i\|$.
To track the errors, define
\[
s_{k+1}
=
m_{K-k}(s_k+\epsilon_k)
+\sum_{i=0}^{k-1}\Delta_{K-i}(s_i+\epsilon_i),
\qquad k=0,\ldots,K-1,
\]
where $s_0=0$. Then, by induction, we can verify that
\[
\|x^k-x^\star\|\le \kappa+s_k.
\]
First, $\|x^0-x^\star\|=\kappa+s_0$. Also, we have
\begin{align*}
\|x^{k+1}-x^\star\|
&=
\Biggl\|
m_{K-k}(Tx^k-x^\star+e_k)
+\sum_{i=0}^{k-1}\Delta_{K-i}(Tx^i-x^\star+e_i)
+(1-m_K)(x^0-x^\star)
\Biggr\|\\
&\le
m_{K-k}(\|Tx^k-x^\star\|+\|e_k\|)
+\sum_{i=0}^{k-1}\Delta_{K-i}
  (\|Tx^i-x^\star\|+\|e_i\|)
+(1-m_K)\|x^0-x^\star\|\\
&\le
m_{K-k}(\|x^k-x^\star\|+\|e_k\|)
+\sum_{i=0}^{k-1}\Delta_{K-i}
  (\|x^i-x^\star\|+\|e_i\|)
+(1-m_K)\|x^0-x^\star\|\\
&\le
m_{K-k}(s_k+\epsilon_k)
+\sum_{i=0}^{k-1}\Delta_{K-i}(s_i+\epsilon_i)+\kappa\\
&=s_{k+1}+\kappa,
\end{align*}
where the first inequality follows from the triangle inequality,
the second is from nonexpansiveness and $Tx^\star=x^\star$,
and the third follows from the induction hypothesis.
Then, we also obtain
\[
\|Tx^k-x^0\|\le2\kappa+s_k,
\]
since
$\|Tx^k-x^0\|
\le\|Tx^k-x^\star\|+\|x^\star-x^0\|$, and
\begin{align*}
\|x^{k+1}-x^0\|
&=
\left\|
m_{K-k}(Tx^k-x^0+e_k)
+\sum_{i=0}^{k-1}\Delta_{K-i}(Tx^i-x^0+e_i)
\right\|\\
&\le
m_{K-k}(2\kappa+s_k+\epsilon_k)
+\sum_{i=0}^{k-1}\Delta_{K-i}
  (2\kappa+s_i+\epsilon_i)\\
&=2m_K \kappa +s_{k+1},
\end{align*}
where the inequality uses the previous bound, and the last
equality uses the definition of $s_{k+1}$.

Now define
\[
h_0=2m_K\kappa,
\qquad
h_j=m_{K-j}(h_{j-1}+2\epsilon_{j-1}),
\quad j=1,\ldots,K-1.
\]
Now, we prove the following key lemma by induction on $j$.  
\begin{lemma}
\[
\|x^k-x^j\|\le s_k-s_j+h_j,
\qquad 0\le j<k\le K.
\]
\end{lemma}
\begin{proof}
The case $j=0$ follows from the previous result.

Next, for $1\le j<k\le K$, we have
\begin{align*}
\|x^k-x^j\|
&=
\Biggl\|
m_{K-k+1}\widetilde{T}x^{k-1}
+\sum_{i=0}^{k-2}\Delta_{K-i}\widetilde{T}x^i
+(1-m_K)x^0\\
&\qquad-
\left(
m_{K-j+1}\widetilde{T}x^{j-1}
+\sum_{i=0}^{j-2}\Delta_{K-i}\widetilde{T}x^i
+(1-m_K)x^0
\right)
\Biggr\|\\
&=
\left\|
m_{K-k+1}\widetilde{T}x^{k-1}
+\sum_{i=j}^{k-2}\Delta_{K-i}\widetilde{T}x^i
-m_{K-j}\widetilde{T}x^{j-1}
\right\|\\
&=
\Biggl\|
m_{K-k+1}
  (\widetilde{T}x^{k-1}-\widetilde{T}x^{j-1})
+\sum_{i=j}^{k-2}\Delta_{K-i}
  (\widetilde{T}x^i-\widetilde{T}x^{j-1})
\Biggr\|\\
&\le
m_{K-k+1}
  \|Tx^{k-1}+e_{k-1}-Tx^{j-1}-e_{j-1}\|\\
&\quad+
\sum_{i=j}^{k-2}\Delta_{K-i}
  \|Tx^i+e_i-Tx^{j-1}-e_{j-1}\|\\
&\le
m_{K-k+1}
  (\|x^{k-1}-x^{j-1}\|+\epsilon_{k-1}+\epsilon_{j-1})\\
&\quad+
\sum_{i=j}^{k-2}\Delta_{K-i}
  (\|x^i-x^{j-1}\|+\epsilon_i+\epsilon_{j-1})\\
&\le
m_{K-k+1}
\bigl(
s_{k-1}-s_{j-1}+h_{j-1}
+\epsilon_{k-1}+\epsilon_{j-1}
\bigr)\\
&\quad+
\sum_{i=j}^{k-2}\Delta_{K-i}
\bigl(
s_i-s_{j-1}+h_{j-1}
+\epsilon_i+\epsilon_{j-1}
\bigr)\\
&=
m_{K-k+1}(s_{k-1}+\epsilon_{k-1})
+\sum_{i=j}^{k-2}\Delta_{K-i}(s_i+\epsilon_i)\\
&\quad+
m_{K-j}(-s_{j-1}+h_{j-1}+\epsilon_{j-1}),
\end{align*}
where the inequalities follow from the triangle inequality,
nonexpansiveness, and the induction hypothesis.
% The last equality uses $
% m_{K-k+1}+\sum_{i=j}^{k-2}\Delta_{K-i}=m_{K-j}$.

Similarly, by the recursion for $s_k$,
\[
s_k-s_j
=
m_{K-k+1}(s_{k-1}+\epsilon_{k-1})
+\sum_{i=j}^{k-2}\Delta_{K-i}(s_i+\epsilon_i)
-m_{K-j}(s_{j-1}+\epsilon_{j-1}).
\]
Therefore, we obtain
\begin{align*}
\|x^k-x^j\|
&\le
s_k-s_j+m_{K-j}(h_{j-1}+2\epsilon_{j-1})\\
&=s_k-s_j+h_j.
\end{align*}
\end{proof}
Then, we have
\begin{align*}
\|Tx^K-x^K\|
&=
\Biggl\|
m_1(Tx^K-\widetilde{T}x^{K-1})
+\sum_{i=0}^{K-2}\Delta_{K-i}
  (Tx^K-\widetilde{T}x^i)
+(1-m_K)(Tx^K-x^0)
\Biggr\|\\
&\le
\sum_{i=0}^{K-1}\Delta_{K-i}
  \bigl(\|Tx^K-Tx^i\|+\epsilon_i\bigr)
+(1-m_K)\|Tx^K-x^0\|\\
&\le
\sum_{i=0}^{K-1}\Delta_{K-i}
  \bigl(s_K-s_i+h_i+\epsilon_i\bigr)
+(1-m_K)(2\kappa+s_K)\\
&=
\sum_{i=0}^{K-1}\Delta_{K-i}
  (h_i-s_i+\epsilon_i)
+s_K+2\kappa(1-m_K)\\
&=
\sum_{i=0}^{K-1}\Delta_{K-i}h_i
+2\sum_{i=0}^{K-1}\Delta_{K-i}\epsilon_i
+2\kappa(1-m_K),
\end{align*}
where the last inequality is from the previous lemma and the last equality is from the recursion of $s_K$.
% \[
% s_K=\sum_{i=0}^{K-1}\Delta_{K-i}(s_i+\epsilon_i).
% \]

The recursions for $h_i$ and $W_n$ give
\[
h_i
=
2\kappa\prod_{\ell=K-i}^{K}m_\ell
+
2\sum_{r=0}^{i-1}
\left(\prod_{\ell=K-i}^{K-r-1}m_\ell\right)\epsilon_r,
\qquad
W_n
=
\sum_{j=1}^{n}\Delta_j
\prod_{\ell=j}^{n-1}m_\ell,
\]
where empty sums are zero and empty products are one.
Therefore,
\begin{align*}
\|Tx^K-x^K\|
&\le
2\kappa(1-m_K)
+\sum_{i=0}^{K-1}\Delta_{K-i}h_i
+2\sum_{i=0}^{K-1}\Delta_{K-i}\epsilon_i\\
&=
2\kappa(1-m_K)
+2\kappa\sum_{i=0}^{K-1}\Delta_{K-i}
  \prod_{\ell=K-i}^{K}m_\ell\\
&\quad+
2\sum_{i=0}^{K-1}\sum_{r=0}^{i-1}
\Delta_{K-i}
\left(\prod_{\ell=K-i}^{K-r-1}m_\ell\right)\epsilon_r
+2\sum_{r=0}^{K-1}\Delta_{K-r}\epsilon_r\\
&=
2\kappa \left(
1-m_K
+m_K\sum_{j=1}^{K}\Delta_j
  \prod_{\ell=j}^{K-1}m_\ell
\right)\\
&\quad+
2\sum_{r=0}^{K-1}
\left(
\Delta_{K-r}
+\sum_{i=r+1}^{K-1}\Delta_{K-i}
  \prod_{\ell=K-i}^{K-r-1}m_\ell
\right)\epsilon_r\\
&=
2\kappa(1-m_K+m_KW_K)
+2\sum_{r=0}^{K-1}
\left(
\Delta_{K-r}
+\sum_{j=1}^{K-r-1}\Delta_j
  \prod_{\ell=j}^{K-r-1}m_\ell
\right)\epsilon_r\\
&=
2\kappa(1-m_K+m_KW_K)
+2\sum_{r=0}^{K-1}
\left(
\sum_{j=1}^{K-r}\Delta_j
  \prod_{\ell=j}^{K-r-1}m_\ell
\right)\epsilon_r\\
&=
2\kappa(1-m_K+m_KW_K)
+2\sum_{r=0}^{K-1}W_{K-r}\epsilon_r.
\end{align*}

Lastly, let $m_r=r/(r+2)$.
Then, $W_r \le \frac{2}{r+2}$, since the recursion for $W_r$ and induction give
\begin{align*}
W_r
&=
\frac{2}{(r+1)(r+2)}
+\frac{r-1}{r+1}W_{r-1}\\
&\le
\frac{2}{(r+1)(r+2)}
+\frac{2(r-1)}{(r+1)^2}\\
&\le\frac{2}{r+2}.
\end{align*}
Also, using the bound for $B_K=1-m_K+m_KW_K$ proved in Theorem~\ref{thm:dh}, we get
\[
1-m_K+m_KW_K\le\frac{4}{K+4}.
\]
\end{proof}

\subsection{Comparison with the error bound of Inexact Halpern iteration}

Define the inexact Halpern iteration as
\[
x^{k+1}=m_{k+1}\widetilde{T}x^k+(1-m_{k+1})x^0.
\]
% \begin{fact}
% Assume Assumption~\ref{assum_fixp} holds and that
% $\widetilde{T}x\in\cD$ for all $x\in\cD$.
% Let $K\ge1$, $x^0\in\cD$, and
% $0=m_0\le m_1\le\cdots\le m_K\le1$.
% Suppose $Tx^k+e_k\in\cD$ for $k=0,\ldots,K-1$.
% Then the inexact Halpern iteration satisfies
% \[
% \|Tx^K-x^K\|
% \le
% \kappa(1-m_K+m_KW_K)
% +
% 2\sum_{i=0}^{K-1}
% \left(\prod_{j=i+1}^{K}m_j\right)\|e_i\|,
% \]
% where
% \[
% W_0=0,
% \qquad
% W_i=m_i-m_{i-1}+m_{i-1}W_{i-1},
% \quad i=1,\ldots,K.
% \]
% Specifically, if $m_k=k/(k+2)$, then
% \[
% \|Tx^K-x^K\|
% \le
% \frac{4\kappa}{K+4}
% +
% \frac{2}{(K+1)(K+2)}
% \sum_{i=0}^{K-1}(i+1)(i+2)\|e_i\|.
% \]
% \end{fact}
where
$\widetilde{T}x^k=Tx^k+e_k$. By adapting the analysis in \cite{bravo2026stochastic}
to our inexact setup, we obtain the following result.

\begin{fact}[\cite{bravo2026stochastic}]\label{fact_ih}
Assume Assumption~\ref{assum_fixp}.
Let
$0=m_0\le m_1\le\cdots\le m_K\le1$.
Then the inexact Halpern iteration satisfies
\[
\|Tx^K-x^K\|
\le
2(1-m_K+m_KW_K)\kappa
+
4\sum_{i=0}^{K-1}
\left(\prod_{j=i+1}^{K}m_j\right)\|e_i\|,
\]
where
\[
W_0=0,
\qquad
W_i=m_i-m_{i-1}+m_{i-1}W_{i-1},
\quad i=1,\ldots,K.
\]
Specifically, if $m_k=k/(k+2)$, then
\[
\|Tx^K-x^K\|
\le
\frac{8\kappa}{K+4}
+
\frac{4}{(K+1)(K+2)}
\sum_{i=0}^{K-1}(i+1)(i+2)\|e_i\|.
\]
\end{fact}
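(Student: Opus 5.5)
Since $x^{k+1}=m_{k+1}(Tx^k+e_k)+(1-m_{k+1})x^0$ is an affine combination of $x^0$ and $Tx^k$, I will compare the inexact iterates with an exact Halpern trajectory and then run the standard telescoping argument of \cite{sabach2017first, contreras2022optimal} used for Fact~\ref{fact_1}, tracking the errors through it.

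\textbf{Step 1 (a priori bounds).} I would first show by induction that
\[
\|x^k-x^\star\|\le \kappa+\sum_{i<k}\Bigl(\prod_{j=i+1}^{k}m_j\Bigr)\|e_i\|.
\]
This follows from
\[
x^{k+1}-x^\star=m_{k+1}(Tx^k-x^\star+e_k)+(1-m_{k+1})(x^0-x^\star),
\]
nonexpansiveness, and $m_{k+1}\le 1$. The triangle inequality then gives bounds on $\|Tx^k-x^0\|$ of the form $2\kappa$ plus the same accumulated error.

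\textbf{Step 2 (consecutive differences).} Next I would write
\[
x^{k+1}-x^k=m_{k+1}(Tx^k-Tx^{k-1})+(m_{k+1}-m_k)(Tx^{k-1}-x^0)+m_{k+1}e_k-m_ke_{k-1}.
\]
Setting $d_k=\|x^k-x^{k-1}\|$, this yields
\[
d_{k+1}\le m_{k+1}d_k+\Delta_{k+1}\|Tx^{k-1}-x^0\|+m_{k+1}\|e_k\|+m_k\|e_{k-1}\|.
\]
Unrolling this recursion gives
\[
d_K\le 2\kappa\, m_KW_K\ \text{(up to the indexing used in Fact~\ref{fact_1})}+\text{error terms}.
\]
In each error term, $\|e_i\|$ appears with coefficient bounded by a constant times $\prod_{j=i+1}^K m_j$. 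The two appearances of each $e_i$, at steps $i+1$ and $i+2$, plus its contribution through $\|Tx^{k-1}-x^0\|$, are what produce the factor $4$ rather than $2$. To keep that factor exactly $4$, I will use $\sum_k\Delta_k\prod_{j>k}m_j\le 1$.

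\textbf{Step 3 (the residual).} Finally, from
\[
Tx^K-x^K=(1-m_K)(Tx^K-x^0)+m_K(Tx^K-Tx^{K-1})-m_Ke_{K-1},
\]
I get
\[
\|Tx^K-x^K\|\le (1-m_K)\|Tx^K-x^0\|+m_Kd_K+m_K\|e_{K-1}\|.
\]
Combining Steps 1 and 2 gives the stated general bound.

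\textbf{Specialization to $m_k=k/(k+2)$.} Here $\prod_{j=i+1}^K m_j=\frac{(i+1)(i+2)}{(K+1)(K+2)}$ by telescoping, and $2(1-m_K+m_KW_K)\le 8/(K+4)$ follows from the $B_K$ bound in the proof of Theorem~\ref{thm:dh}.

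\textbf{Main obstacle.} The hardest part is the bookkeeping in Step 2. Errors enter both directly and through the $\|Tx^{k-1}-x^0\|$ terms, and I must verify that their combined weight is at most $4\prod_{j=i+1}^K m_j$. I would first try to absorb the indirect contributions using $\Delta_k\le 1-m_{k-1}$ together with monotonicity of $m$.
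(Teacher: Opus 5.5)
Your Steps 1 and 3 are fine, but the regrouping in Step 2 cannot give the stated bound, and this is not an indexing issue. Write $\Pi_i=\prod_{j=i+1}^K m_j$.

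\textbf{Where Step 2 fails.} Your recursion has multiplier $m_{k+1}$ and anchor term $\Delta_{k+1}(Tx^{k-1}-x^0)$. Already for $e\equiv0$ it unrolls to $d_K\le 2\kappa D_K$ with $D_K=\sum_{i=1}^K\Delta_i\prod_{j=i+1}^K m_j$, not to $W_K=\sum_{i=1}^K\Delta_i\prod_{j=i}^{K-1}m_j$. Termwise $D_K\ge W_K$, because $m_i\le m_K$. For $m_k=k/(k+2)$ one has $D_K=\frac{2K}{(K+1)(K+2)}$ and
\[
1-m_K+m_KD_K-\frac{4}{K+4}=\frac{2K(K-2)}{(K+1)(K+2)^2(K+4)}>0\qquad(K\ge3),
\]
so you miss even the exact-case term $8\kappa/(K+4)$.

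The error weights fail for the same reason. The error $e_{k-1}$ enters $d_{k+1}$ with weight $m_k$, but is then propagated by $\prod_{l=k+2}^K m_l$, which skips $m_{k+1}$. Its weight in $m_Kd_K$ is therefore $\Pi_i$ with the factor $m_{i+2}$ replaced by $m_K$ (here $i=k-1$). For general nondecreasing $m$ this is not $O(\Pi_i)$. For example, with $K=3$, $m_1=m_2=\delta$, $m_3=1$, your recursion puts total weight $2\delta$ on $\|e_0\|$, while the statement allows only $4\delta^2$. Since your bound already exceeds the target there, no later estimate such as $\Delta_k\le 1-m_{k-1}$ or $\sum_k\Delta_k\prod_{j>k}m_j\le1$ can rescue it.

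\textbf{The fix.} The missing idea is the other regrouping,
\[
x^{k+1}-x^k=m_k(Tx^k-Tx^{k-1})+\Delta_{k+1}(Tx^k-x^0)+m_{k+1}e_k-m_ke_{k-1}.
\]
This gives $d_{k+1}\le m_kd_k+\Delta_{k+1}(2\kappa+a_k)+m_{k+1}\|e_k\|+m_k\|e_{k-1}\|$, where $a_k=\sum_{i<k}\bigl(\prod_{j=i+1}^k m_j\bigr)\|e_i\|$ is the error bound from your Step 1. The $\kappa$-part now unrolls to exactly $2\kappa W_K$.

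In the final residual, the weight of $\|e_i\|$ is the sum of four pieces:
\begin{itemize}
\item $(1-m_K)\Pi_i$, from $(1-m_K)a_K$;
\item $m_{i+1}\Pi_i$, from the direct term;
\item $\Pi_i$, from the lagged term together with the last $m_Ke_{K-1}$;
\item $(m_K-m_{i+1})\Pi_i$, through the $a_k$, by telescoping $\sum_{k=i+1}^{K-1}\Delta_{k+1}$.
\end{itemize}
These sum to exactly $2\Pi_i$. So the constant $4$ holds with room to spare, and no auxiliary inequality is needed. (The paper gives no proof of this Fact beyond attributing it to an adaptation of \cite{bravo2026stochastic}.)
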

When $m_k=k/(k+2)$, as Theorem~\ref{thm:idh} and Fact~\ref{fact_ih} show, the error weights in the bounds
are proportional to
$\frac{(i+1)(i+2)}{(K+1)(K+2)}$ for Halpern and
$\frac{1}{K-i+2}$ for Dual Halpern. This implies
$O(K\epsilon)$ and $O(\log(K+1)\epsilon)$ error terms under a uniform error bound
($\|e_i\|\le\epsilon$), while they exhibit the same residual bound in the exact setup.
% This difference comes from the different error-propagation
% mechanisms of the two iterations, 
As we will see in the next section, this robust error bound of inexact dual Halpern iteration
leads to optimal oracle complexity in the stochastic oracle setup for $p=\infty$,
up to a logarithmic dimension factor.
\subsection{A simple numerical illustration}
Motivated by \cite{bravo2026minimax}, we consider a toy example with
a nonexpansive linear map $Tx=Ax$ on $(\real^2,\infn{\cdot})$, where
\[
A = \frac{1}{|\cos\theta| + |\sin\theta|}
\begin{pmatrix}
\cos\theta & -\sin\theta \\
\sin\theta & \cos\theta
\end{pmatrix},
\]
 and a fixed point is the origin. We set $\theta=\pi/2$, $x^0=(1,0)$, the coefficients for both methods
to $m_k=k/(k+2)$, and the total number of iterations to $K=400$.
% The horizon $K$ for dual Halpern iteration remains fixed throughout each run.

\begin{figure}
    \centering
    \includegraphics[width=1\linewidth]{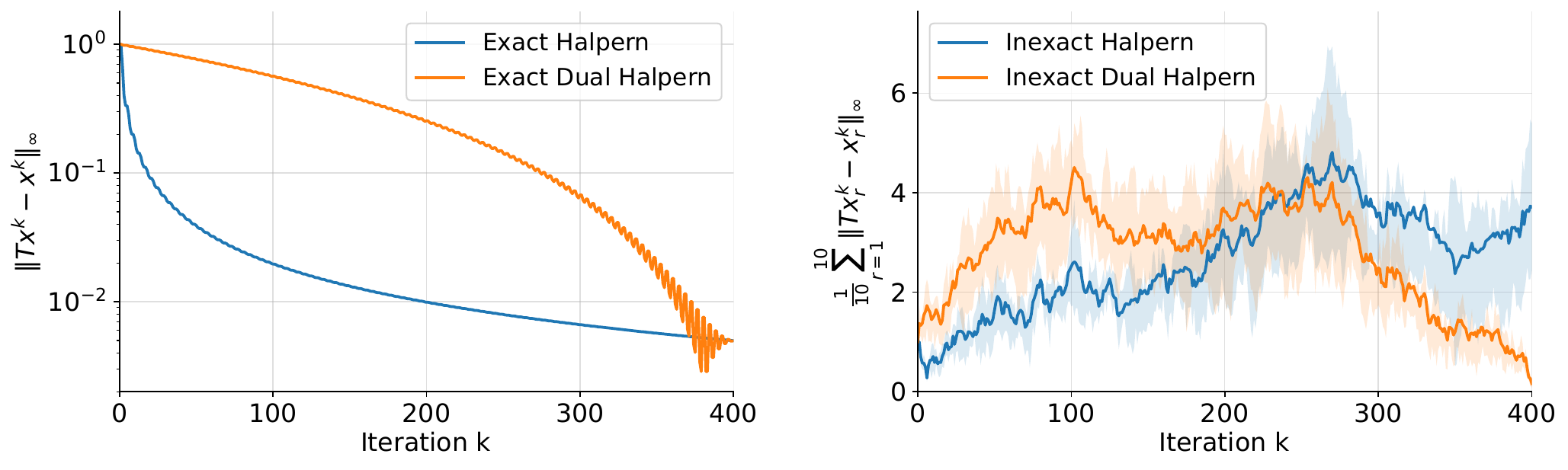}
    \caption{
    Comparison of Halpern iteration and dual Halpern iteration.
    In the exact setting (left), both methods attain the same final
    residual $\infn{Tx^K-x^K}\approx 0.004975$.
    In the inexact setting (right), the final residuals averaged
    over $10$ independent trials,
    $\frac{1}{10}\sum_{r=1}^{10}\infn{Tx_r^K-x_r^K}$,
    are approximately $3.712368$ for inexact Halpern iteration and
    $0.163416$ for inexact dual Halpern iteration.
    The shaded regions indicate the empirical 25th--75th percentiles
    of the residuals across trials.
    }
    \label{fig:placeholder}
\end{figure}

In the deterministic setting, the left panel of
Figure~\ref{fig:placeholder} shows that  Halpern iteration reduces the
residual steadily, whereas dual Halpern iteration reduces it more
sharply near the end of the run, and both methods attain the same
final residual (in particular, for linear operators,
the last iterates of Halpern and dual Halpern iterations are identical 
\cite{yoon2024optimal}).

In the inexact setting, we add uniformly distributed errors to each
operator evaluation. Specifically,
\[
\widetilde{T} x^k=Tx^k+e_k,
\qquad e_k=(a_k,b_k),
\qquad a_k,b_k\sim\operatorname{Unif}[-1/2,1/2],
\]
where the coordinates are sampled independently across iterations
and trials. Within each trial, both methods receive the same error
vector at each iteration. We run $10$ independent trials and plot
the mean true residuals. The right panel of
Figure~\ref{fig:placeholder} shows that during the early iterations, inexact
Halpern iteration exhibits a smaller mean residual than inexact
dual Halpern iteration. Near the final iteration, notably, inexact
dual Halpern iteration achieves a substantially smaller mean residual
and a narrower empirical interquartile band.

\section{Stochastic Dual Halpern iteration}
In this section, we apply the preceding result for the
Inexact Dual Halpern iteration to the following stochastic
oracle setup.

\begin{assumption}\label{assum_stoo}
The stochastic oracle $\widetilde{T}(x,\xi)$ satisfies
% , for some $\sigma\ge0$,
\[
\expec[\widetilde{T}(x,\xi)]=T(x),
\qquad
\expec[\|\widetilde{T}(x,\xi)-T(x)\|^2]\le\sigma^2
\qquad \forall x\in\cX.
\]
% where $\sigma\ge0$.
\end{assumption}

Now, we introduce our Stochastic Dual Halpern iteration.
\begin{algorithm}[H]
\caption{Stochastic Dual Halpern iteration}
\label{alg:spmd}
\begin{algorithmic}[1]
\State \textbf{Input:}
 $x^0 \in\cX$, $N$, $\{k_n\}_{n=0}^{N-1}$, $\{m_n\}_{n=1}^{N}$.
\State Set $\widetilde{T}x^{-1}:=x^0$.
\For{$n=0,1,\ldots,N-1$}
    \State Draw independent samples at $x^n$:
    $\{\widetilde{T}(x^n,\xi_{n,i})\}_{i=1}^{k_n}$.
    \State
    $\displaystyle
    \widetilde{T}x^n
    =\frac1{k_n}\sum_{i=1}^{k_n}
    \widetilde{T}(x^n,\xi_{n,i})
    $.
    \State
    $\displaystyle
    x^{n+1}
    =
    x^n+m_{N-n}
    (\widetilde{T}x^n-\widetilde{T}x^{n-1})$.
\EndFor
\State \textbf{Output:} $x^N$.
\end{algorithmic}
\end{algorithm}
Note that $\widetilde{T}x^n$ corresponds to the inexact operator evaluation which we studied in the previous section.

The following fact bounds the expected norm of an average
of independent zero-mean random vectors in $(\real^d, \|\cdot\|_p)$. In particular, we refer to the summarized result in \cite{diakonikolas2026oracle}.
\begin{fact}[\cite{mascioni1988weak, ledoux1991probability, ball1994sharp, juditsky2008large}]\label{fact_sam}
For independent zero-mean random vectors
$\zeta_1,\ldots,\zeta_m\in\real^d$ satisfying
$\expec[\|\zeta_i\|_p^2]\le\sigma^2$ with $p\in(1,\infty]$,
\[
\expec\left[
\left\|\frac1m\sum_{i=1}^m\zeta_i\right\|_p
\right]
\le C_p\frac{\sigma}{m^{1/q_p}},
\]
where
\[
q_p=
\begin{cases}
\displaystyle \frac{p}{p-1}, & 1<p\le2,\\[6pt]
2, & 2<p\le\infty,
\end{cases}
\qquad
C_p=
\begin{cases}
O(1), & 1<p\le2,\\[4pt]
O(\sqrt{p-1}), & 2<p<\infty,\\[4pt]
O(\sqrt{\log(2d)}), & p=\infty.
\end{cases}
\]
\end{fact}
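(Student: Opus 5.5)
The statement to prove is Fact~\ref{fact_sam}: a bound on the expected $\ell_p$ norm of an average of independent zero-mean vectors. My plan is to reduce it to the standard smoothness (martingale type 2) or type-$q$ inequalities for $\ell_p$. Everything runs on $S_m=\sum_{i=1}^m\zeta_i$. After bounding $\expec\|S_m\|_p$, dividing by $m$ gives the claim.

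\emph{Case $2\le p<\infty$.} The function $\psi(x)=\|x\|_p^2$ is $2(p-1)$-smooth with respect to $\|\cdot\|_p$. Expanding $\psi(S_{k})=\psi(S_{k-1}+\zeta_k)$ and conditioning on $\zeta_1,\ldots,\zeta_{k-1}$, the linear term $\langle\nabla\psi(S_{k-1}),\zeta_k\rangle$ has zero mean because the vectors are independent and zero-mean. This gives
\[
\expec\|S_k\|_p^2\le\expec\|S_{k-1}\|_p^2+(p-1)\expec\|\zeta_k\|_p^2 .
\]
Summing over $k$ gives $\expec\|S_m\|_p^2\le (p-1)m\sigma^2$. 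Jensen's inequality then gives $\expec\|S_m\|_p\le\sqrt{p-1}\,\sigma\sqrt m$, so $C_p=O(\sqrt{p-1})$ and $q_p=2$.

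\emph{Case $p=\infty$.} I would pass to $\|\cdot\|_{r}$ with $r=\max(2,\log(2d))$. On $\real^d$ we have $\|x\|_\infty\le\|x\|_r\le d^{1/r}\|x\|_\infty\le e\|x\|_\infty$. Applying the previous case in $\ell_r$, and bounding $\expec\|\zeta_i\|_r^2\le e^2\sigma^2$, gives $\expec\|S_m\|_\infty\le e\sqrt{r-1}\,\sigma\sqrt m=O(\sqrt{\log(2d)})\,\sigma\sqrt m$.

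\emph{Case $1<p\le2$.} Here $\|\cdot\|_p^2$ is not smooth with a useful constant. Instead I would use the type-$p$ inequality for $\ell_p$. First symmetrize: $\expec\|S_m\|_p\le2\expec\|\sum_i\varepsilon_i\zeta_i\|_p$, with independent Rademacher signs $\varepsilon_i$. Conditional on the $\zeta_i$, apply Khintchine coordinatewise together with $\big(\sum_i|\zeta_{ij}|^2\big)^{1/2}\le\big(\sum_i|\zeta_{ij}|^p\big)^{1/p}$, valid since $p\le2$. This gives
\[
\expec_\varepsilon\Big\|\sum_i\varepsilon_i\zeta_i\Big\|_p\le C\Big(\sum_i\|\zeta_i\|_p^p\Big)^{1/p}.
\]
Jensen's inequality (concavity of $t\mapsto t^{1/p}$), followed by $\expec\|\zeta_i\|_p^p\le(\expec\|\zeta_i\|_p^2)^{p/2}\le\sigma^p$, then yields $\expec\|S_m\|_p\le C\sigma m^{1/p}$. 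Dividing by $m$ gives the rate $m^{1/p-1}=m^{-1/q_p}$ with $q_p=p/(p-1)$. The Khintchine constant is uniform for $p\in(1,2]$, so $C_p=O(1)$.

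\emph{Main obstacle.} The delicate step is the smoothness inequality in the first case, specifically getting the constant $p-1$ rather than something of order $p$. I would cite the known uniform-smoothness modulus of $\ell_p$, $p\ge2$ (as in the references of Fact~\ref{fact_sam}), instead of rederiving it.
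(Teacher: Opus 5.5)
Your proof is correct, and it does more than the paper: the paper gives no proof of this Fact and simply imports it from the cited literature, via the summary in Diakonikolas et al. Your three cases are the standard arguments behind those citations:
\begin{itemize}
\item the sharp smoothness inequality $\|x+h\|_p^2\le\|x\|_p^2+\langle\nabla\|x\|_p^2,h\rangle+(p-1)\|h\|_p^2$ for $p\ge2$ (Ball--Carlen--Lieb, Juditsky--Nemirovski);
\item the comparison of $\|\cdot\|_\infty$ with $\|\cdot\|_r$, $r\asymp\log(2d)$, for $p=\infty$ (Juditsky--Nemirovski);
\item symmetrization plus Khintchine, i.e.\ type $p$ of $\ell_p$, for $1<p\le2$ (Ledoux--Talagrand).
\end{itemize}
What you gain is a self-contained argument with explicit constants: $C_p=2$ for $1<p\le2$, $C_p=\sqrt{p-1}$ for $2\le p<\infty$, and $C_\infty\le e\sqrt{\max\{1,\log(2d)-1\}}$.

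Two small remarks. First, the step you flag as delicate is not actually delicate. For $p\ge2$ one has $p\le2(p-1)$, so any smoothness constant of order $p$ already gives $C_p=O(\sqrt{p-1})$; the sharp constant $p-1$ is not needed for the stated $O(\cdot)$ bound. Second, in the case $1<p\le2$ no Khintchine constant is involved at all. The bound $\expec_\varepsilon|\sum_i\varepsilon_i a_i|^p\le(\sum_i a_i^2)^{p/2}$ is just Jensen's inequality for $p\le2$, so the constant is exactly $1$, not merely uniform in $p$. Your argument in this case also uses only $\expec\|\zeta_i\|_p^p\le\sigma^p$, which is weaker than the second-moment assumption in the statement.
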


Based on Fact~\ref{fact_sam}, the following lemma guarantees $\widetilde{T}x\approx Tx$ in expectation if we use sufficiently many samples.
\begin{lemma}\label{lem_stoo}
Assume Assumptions~\ref{assum_fixp} and~\ref{assum_stoo}. Let $(\cX, \|\cdot\|)= (\real^d, \|\cdot\|_p)$ where $p\in(1,\infty]$.
Let $\epsilon>0$.
With
\[
n=
\max\left\{
1,\left\lceil
\left(\frac{C_p\sigma}{\epsilon}\right)^{q_p}
\right\rceil
\right\}
\]
independent samples, we have
\[
\expec\left[
\left\|\frac1n\sum_{i=1}^n\widetilde{T}(x,\xi_i)
-T(x)
\right\|
\right]
\le\epsilon
\qquad \forall x\in\cX.
\]
\end{lemma}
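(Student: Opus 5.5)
The plan is to apply Fact~\ref{fact_sam} directly to the centered samples and then choose the sample size so that the resulting bound falls below $\epsilon$. Fix $x\in\cX$ and set $\zeta_i=\widetilde{T}(x,\xi_i)-T(x)$ for $i=1,\ldots,n$.

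First I check the hypotheses of Fact~\ref{fact_sam}. The samples $\xi_i$ are independent, so the $\zeta_i$ are independent random vectors in $\real^d$. By Assumption~\ref{assum_stoo}, $\expec[\zeta_i]=\expec[\widetilde{T}(x,\xi_i)]-T(x)=0$ and $\expec[\|\zeta_i\|_p^2]\le\sigma^2$. Since $\frac1n\sum_{i=1}^n\widetilde{T}(x,\xi_i)-T(x)=\frac1n\sum_{i=1}^n\zeta_i$, the fact gives
\[
\expec\left[\left\|\frac1n\sum_{i=1}^n\widetilde{T}(x,\xi_i)-T(x)\right\|_p\right]\le C_p\frac{\sigma}{n^{1/q_p}}.
\]

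Next I verify that the prescribed $n$ makes this bound at most $\epsilon$. The exponent satisfies $q_p\ge1$, so $t\mapsto t^{1/q_p}$ is increasing on $[0,\infty)$. The choice of $n$ gives $n\ge\lceil (C_p\sigma/\epsilon)^{q_p}\rceil\ge (C_p\sigma/\epsilon)^{q_p}$, hence $n^{1/q_p}\ge C_p\sigma/\epsilon$. If $\sigma>0$, dividing yields $C_p\sigma/n^{1/q_p}\le\epsilon$. If $\sigma=0$, then $\zeta_i=0$ almost surely, so the left-hand side is $0\le\epsilon$. The $\max\{1,\cdot\}$ in the definition only guarantees $n\ge1$, so that the average is well defined, and enlarging $n$ never hurts the bound. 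Since $x$ was arbitrary, the estimate holds for all $x\in\cX$.

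There is no real obstacle here. The only points needing care are to treat $C_p$ as the specific constant from Fact~\ref{fact_sam} (the $O(\cdot)$ there hides an absolute constant that is fixed once and for all) and to handle the degenerate case $\sigma=0$ as above.
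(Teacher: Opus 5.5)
Your proposal is correct and follows the paper's proof exactly: you apply Fact~\ref{fact_sam} to the centered samples $\zeta_i=\widetilde{T}(x,\xi_i)-T(x)$ and use $n\ge (C_p\sigma/\epsilon)^{q_p}$ to get $C_p\sigma n^{-1/q_p}\le\epsilon$. Your separate treatment of $\sigma=0$ is harmless but unnecessary, since $n^{1/q_p}\ge C_p\sigma/\epsilon$ rearranges to $C_p\sigma n^{-1/q_p}\le\epsilon$ without dividing by $\sigma$.
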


\begin{proof}By Fact~\ref{fact_sam},
\begin{align*}
\expec\left[
\left\|
\frac1n\sum_{i=1}^n
\bigl(\widetilde{T}(x,\xi_i)-T(x)\bigr)
\right\|
\right]
&\le C_p\sigma n^{-1/q_p}\\
&\le\epsilon.
\end{align*}
\end{proof}

Now, we present our main theorem in the stochastic oracle setup which leads to optimal oracle complexity of dual Halpern iteration for $p=\infty$. 
% up to a logarithmic dimension factor.
\begin{theorem}\label{thm:sdh}
Assume Assumptions~\ref{assum_fixp} and~\ref{assum_stoo}. Let $(\cX, \|\cdot\|)= (\real^d, \|\cdot\|_p)$ where $p\in(1,\infty]$.
Let $\epsilon\in(0,\kappa]$ and $q=q_p$. Then, our Stochastic Dual Halpern iteration with
\[
N=\left\lceil\frac{16\kappa}{\epsilon}\right\rceil,
\qquad
m_j=\frac{j}{j+2},
\quad j=1,\ldots,N,
\]
and
\[
k_n=
\max\left\{
1,\left\lceil
\left(\frac{8(q+1)C_p\sigma}{\epsilon}\right)^q
\left(\frac{N}{N-n+2}\right)^{q/(q+1)}
\right\rceil
\right\},
\quad n=0,\ldots,N-1,
\]
satisfies
\[
\expec[\|Tx^N-x^N\|]\le\epsilon,
\]
with a total number of samples 
% (where the constants implicit in $O$ may depend on $p$ for $1<p<2$)
\[
\sum_{n=0}^{N-1}k_n=
\begin{cases}
\displaystyle
O\left(
\frac{\kappa}{\epsilon}
+
\frac{\kappa\sigma^{p/(p-1)}}
{\epsilon^{(2p-1)/(p-1)}}
\right),
&1<p<2,\\[10pt]
\displaystyle
O\left(
\frac{\kappa}{\epsilon}
+
\frac{\kappa(p-1)\sigma^2}{\epsilon^3}
\right),
&2\le p<\infty,\\[10pt]
\displaystyle
O\left(
\frac{\kappa}{\epsilon}
+
\frac{\kappa\sigma^2\log(2d)}{\epsilon^3}
\right),
&p=\infty.
\end{cases}
\]
\end{theorem}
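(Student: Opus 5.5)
We will apply Theorem~\ref{thm:idh} pathwise with $K=N$, $m_k=k/(k+2)$ and $e_n=\widetilde{T}x^n-Tx^n$ (the minibatch error at step $n$), and then take expectations. Theorem~\ref{thm:idh} is deterministic and holds for every realization, so
\[
\expec\|Tx^N-x^N\|\le \frac{8\kappa}{N+4}+4\sum_{n=0}^{N-1}\frac{\expec\|e_n\|}{N-n+2}.
\]
With $N=\lceil 16\kappa/\epsilon\rceil$, the first term is at most $\epsilon/2$.

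For the error terms, we condition on the history $\mathcal F_n$ generated by all samples drawn before step $n$. Given $\mathcal F_n$, the point $x^n$ is fixed and the $k_n$ samples are fresh, so Fact~\ref{fact_sam} (equivalently Lemma~\ref{lem_stoo}) gives $\expec[\|e_n\|\mid\mathcal F_n]\le C_p\sigma k_n^{-1/q}$. The tower property then yields $\expec\|e_n\|\le C_p\sigma k_n^{-1/q}$. This adaptivity is the only subtle point in passing from the deterministic theorem to expectations, and it is harmless here because Theorem~\ref{thm:idh} is linear in the $\|e_n\|$.

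Plugging in $k_n\ge (8(q+1)C_p\sigma/\epsilon)^q (N/(N-n+2))^{q/(q+1)}$ gives
\[
C_p\sigma k_n^{-1/q}\le \frac{\epsilon}{8(q+1)}\left(\frac{N-n+2}{N}\right)^{1/(q+1)}.
\]
Writing $j=N-n+2\in[3,N+2]$, the error sum is therefore bounded by
\[
\frac{\epsilon}{2(q+1)}\,N^{-1/(q+1)}\sum_{j=3}^{N+2} j^{-q/(q+1)}.
\]
Since $j^{-q/(q+1)}$ is decreasing, the sum is bounded by an integral: $\sum_{j=3}^{N+2}j^{-q/(q+1)}\le \int_2^{N+2}t^{-q/(q+1)}dt\le (q+1)(N+2)^{1/(q+1)}$. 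The error contribution is then at most $\frac{\epsilon}{2}\bigl(\frac{N+2}{N}\bigr)^{1/(q+1)}$. This is slightly above $\epsilon/2$, so the constants need a small adjustment. The main obstacle is this constant bookkeeping. I would first try integrating from $2$ more carefully, using $(N+2)^{1/(q+1)}-2^{1/(q+1)}$, and check that $(N+2)^{1/(q+1)}-2^{1/(q+1)}\le N^{1/(q+1)}$, which holds by subadditivity of $t\mapsto t^{1/(q+1)}$. This gives exactly $\le\epsilon/2$, so the total is $\le\epsilon$.

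For the sample count, use $k_n\le 1+(\cdot)$. Then
\[
\sum_n k_n\le N+\left(\frac{8(q+1)C_p\sigma}{\epsilon}\right)^q N^{q/(q+1)}\sum_{j=3}^{N+2}j^{-q/(q+1)}\le N+(q+1)\left(\frac{8(q+1)C_p\sigma}{\epsilon}\right)^q N^{q/(q+1)}(N+2)^{1/(q+1)},
\]
which is $O(N+ C_p^q\sigma^q\epsilon^{-q}N)$ with $q$-dependent constants, using $N+2\le 3N$. Since $\epsilon\le\kappa$, we have $N=O(\kappa/\epsilon)$, giving $O\bigl(\kappa/\epsilon+\kappa C_p^q\sigma^q/\epsilon^{q+1}\bigr)$. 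Substituting $q_p$ and $C_p$ case by case recovers the three regimes. For $1<p<2$, $q=p/(p-1)$ and $q+1=(2p-1)/(p-1)$, with $C_p^q=O(1)$ and the constant absorbed. For $2\le p<\infty$, $q=2$ and $C_p^2=O(p-1)$; at $p=2$ the two cases of Fact~\ref{fact_sam} agree. For $p=\infty$, $C_\infty^2=O(\log(2d))$.
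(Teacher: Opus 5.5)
Your proposal is correct and follows essentially the same route as the paper: you apply Theorem~\ref{thm:idh} pathwise, condition on $\mathcal F_n$ and use the tower property, derive the same per-step bound on $C_p\sigma k_n^{-1/q}$, and use the same sample-count estimate with $N=O(\kappa/\epsilon)$. The only difference is in the sum-to-integral comparison. The paper bounds $\sum_{u=1}^{N}(u+2)^{-q/(q+1)}\le\int_0^N t^{-q/(q+1)}\,dt=(q+1)N^{1/(q+1)}$ directly, using $(u+2)^{-q/(q+1)}\le u^{-q/(q+1)}$; this avoids your detour through $\int_2^{N+2}$ and subadditivity of $t\mapsto t^{1/(q+1)}$, although your refined argument also gives exactly $\epsilon/2$.
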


\begin{proof}
Write $q=q_p$ and let $
e_n:=\|\widetilde{T}x^n-Tx^n\|$. Also let $\mathcal F_n$ denote the history before drawing
the samples at iteration $n$.
Conditional on $\mathcal F_n$, the point $x^n$ is fixed
and the new samples are independent.
Thus,
\[
\expec[e_n\mid\mathcal F_n]
\le C_p\sigma k_n^{-1/q},
\qquad
\expec[e_n]\le C_p\sigma k_n^{-1/q},
\]
by Lemma~\ref{lem_stoo}. By Theorem~\ref{thm:idh},
\[
\|Tx^N-x^N\|
\le
\frac{8\kappa}{N+1}
+
4\sum_{n=0}^{N-1}\frac{e_n}{N-n+2},
\]
and taking expectations gives
\[
\expec[\|Tx^N-x^N\|]
\le
\frac{8\kappa}{N+1}
+
4C_p\sigma
\sum_{n=0}^{N-1}
\frac{k_n^{-1/q}}{N-n+2}.
\]
The choice of $N$ gives
\[
\frac{8\kappa}{N+1}\le\frac{\epsilon}{2}.
\]

The choice of $k_n$ gives
\[
C_p\sigma k_n^{-1/q}
\le
\frac{\epsilon}{8(q+1)}
\left(\frac{N-n+2}{N}\right)^{1/(q+1)},
\]
and therefore,
\begin{align*}
4C_p\sigma
\sum_{n=0}^{N-1}
\frac{k_n^{-1/q}}{N-n+2}
&\le
\frac{\epsilon}{2(q+1)N^{1/(q+1)}}
\sum_{u=1}^{N}(u+2)^{-q/(q+1)}\\
&\le\frac{\epsilon}{2},
\end{align*}
where the second inequality follows from
\[
\sum_{u=1}^{N}(u+2)^{-q/(q+1)}
\le
\int_0^N t^{-q/(q+1)}\,dt
=
(q+1)N^{1/(q+1)}.
\]
This proves that $\expec[\|Tx^N-x^N\|]\le\epsilon$.

Finally, using
$\max\{1,\lceil a\rceil\}\le1+a$ for $a\ge0$,
\begin{align*}
\sum_{n=0}^{N-1}k_n
&\le
N+
\left(\frac{8(q+1)C_p\sigma}{\epsilon}\right)^q
N^{q/(q+1)}
\sum_{u=1}^{N}(u+2)^{-q/(q+1)}\\
&\le
N\left[
1+(q+1)
\left(\frac{8(q+1)C_p\sigma}{\epsilon}\right)^q
\right].
\end{align*}
Since $N=O(\kappa/\epsilon)$ for $\epsilon\le\kappa$, we conclude the proof.
% substituting the values of $q_p$ and the bounds on $C_p$
% gives the stated sample bound.
\end{proof}
% We note that, for $1<p<2$, the constant implicit in $O$ may depend on $p$.

\subsection{Comparison with the oracle complexity of stochastic Halpern iteration}

In the literature on stochastic oracle complexity in normed spaces,
our Theorem~\ref{thm:sdh} improves
the dependence on $\epsilon$ in the oracle complexity
bounds of the recursively regularized Halpern iteration
\cite{diakonikolas2026oracle} by logarithmic factors in $1/\epsilon$.
For $p=\infty$, our result
$O\left(
\frac{\kappa}{\epsilon}
+
\frac{\kappa\sigma^2\log(2d)}{\epsilon^3}
\right)$
matches the lower bound\footnote{
% The lower bound in \cite{diakonikolas2026oracle} is established
% for nonexpansive self-maps of compact convex boxes. 
% % Rescaling their construction to $\cX=[-\kappa,\kappa]^d$ and taking $x^0=0$ gives a lower bound for the class of problems satisfying $\|x^0-x^\star\|_\infty\le\kappa$.
% To keep our iterates in a compact convex set $\cX$, we can apply
% an $L$-Lipschitz retraction $R:\real^d\to\cX$, satisfying
% $R(x)=x$ for all $x\in\cX$, to each minibatch operator estimate
% $\overline T(x)$ as in \cite{diakonikolas2026oracle}. 
% % Since $Tx\in\cX$, $\|R(\overline T(x))-Tx\|\le L\|\overline T(x)-Tx\|$.
% % Together with the convex-combination form of dual Halpern iteration and $x^0\in\cX$, 
% This keeps the iterates in $\cX$ and preserves
% the $O(1/\epsilon^3)$ dependence for fixed $L$ through our inexact
% error analysis. 
% Their constant-probability lower bound also implies an expectation lower bound of the same order after rescaling $\epsilon$ by a constant factor.
The lower bound in \cite{diakonikolas2026oracle} is established
for nonexpansive self-maps of compact convex boxes.
To keep our iterates in a compact convex set $\cX$, we can use
an $L$-Lipschitz retraction $R:\real^d\to\cX$, satisfying
$R(x)=x$ for all $x\in\cX$,
and apply it to each minibatch operator estimate $\widetilde{T} (x)$ as in \cite{diakonikolas2026oracle}.
Provided that $x^0\in\cX$ and $T(\cX)\subseteq\cX$, the
convex-combination form of dual Halpern iteration ensures that
the iterates remain in $\cX$, and our inexact error analysis
preserves the $O(1/\epsilon^3)$ dependence for fixed $L$. 
% Their constant-probability lower bound also implies an expectation lower bound of the same order after rescaling $\epsilon$ by a constant factor.
}
$\Omega\left(
\frac{\kappa}{\epsilon}
+
\frac{\kappa\sigma^2}{\epsilon^3}
\right)$
in sufficiently high dimensions, up to the logarithmic dimension
factor, and is therefore optimal in its dependence on $\epsilon$.
Compared with stochastic Halpern iteration without recursive 
variance reduction in \cite{bravo2026stochastic}, our result improves oracle complexity from $\widetilde{O}(\epsilon^{-5})$
to $O(\epsilon^{-3})$ for $p\in[2,\infty]$.
Table~\ref{table} summarizes the stochastic oracle complexities
of prior works in $(\real^d,\infn{\cdot})$.

% \subsection{Comparison with oracle complexity of stochastic Halpern iteration}
% In the literature on stochastic oracle complexity in normed spaces, our sample complexity improves upon the best knonw stochastic oracle complexity bounds of recursively regularzied Halpern iteration \cite{diakonikolas2026oracle} by logarithmic factors in $1/\epsilon$. For $p=\infty$, our result $O\left(
% \frac{\kappa}{\epsilon}
% +
% \frac{\kappa\sigma^2\log(2d)}{\epsilon^3}
% \right)$ matches the lower bound\footnote{The lower bound in \cite{diakonikolas2026oracle} is estbalished with respecto  selfmap $T:\cX \rightarrow \cX$ where $\cX$ where $\cX$ is compact. We note that using $L$-lipsthiz retraction operator $R:\real^d \rightarrow \cX$ satisfying $R(x)=x, \forall x\in\cX$, we can force iterate staying in $\cX$ as \cite{diakonikolas2026oracle} did, and we can easilty verifeid $O(1/\epsilon^3)$ optimal stocahsic complextiy in this setup so match the lower bound.} $\Omega\left(
% \frac{\kappa}{\epsilon}
% +
% \frac{\kappa\sigma^2}{\epsilon^3}
% \right)$ in sufficiently high dimension, up to the logarithmic dimension factor, and so is optimal in its dependence on $\epsilon$.  Compared to stochastic Halpern iteration without variance reduction in \cite{bravo2026stochastic}, our result improves the stochastic oracle complexity by a factor of $\epsilon^{-2}$ for $p\in[2,\infty]$. Table~\ref{table} summarizes storchastic oracle complextiy of prior works in $(\real^d, \infn{\cdot})$.
% We believe this reveals the robust error bound of dual Halpern iteration.  

Lastly, we note that in the literature on stochastic oracle complexity of dual Halpern iteration, our result generalizes prior oracle complexity results of dual Halpern iteration from Hilbert spaces with a mean-square nonexpansiveness condition \cite{yoon2026direct} to these finite-dimensional normed spaces without additional assumption.  

% Requires \usepackage{booktabs,tabularx}.
% Replace citation keys with those in your bibliography.

\begin{table*}[h!]
\centering
\small
\renewcommand{\arraystretch}{1.2}
\caption{
Oracle complexity for an $\epsilon$-residual
for a $\|\cdot\|_\infty$-nonexpansive operator under Assumption~\ref{assum_stoo}.
Only the dependence on $\epsilon$ is displayed, and the notation $\widetilde{O}$ hides additional logarithmic factors. Expectation guarantees concern
$\mathbb E[\|Tx-x\|_\infty]\le\epsilon$;
high-probability guarantees concern
$\mathbb P(\|Tx-x\|_\infty\le\epsilon)\ge1-\delta$.
% The lower bound applies in sufficiently high dimension.
}
\label{table}
\begin{tabularx}{\textwidth}{
@{}X c c c@{}
}
\toprule
Reference
& oracle complexity
& Guarantee
& \shortstack{Type of Algorithm}
\\
\midrule

\cite{bravo2026stochastic},
Corollary~3.5
& $\widetilde{O}(\epsilon^{-5})$
& Expectation
& \shortstack{Stochastic Halpern Iteration}
\\
\addlinespace

\cite{diakonikolas2026solving},
Corollary~1
& $\widetilde{O}(\epsilon^{-5})$
& High probability
&Variance-reduced Halpern Iteration
\\
\addlinespace

\cite{diakonikolas2026oracle},
Theorem~5
& $\widetilde{O}(\epsilon^{-3})$
& High probability
&Recursively regularized Halpern iteration
\\
\addlinespace

This work, Theorem~\ref{thm:sdh}
& $O(\epsilon^{-3})$
& Expectation
&Stochastic Dual Halpern iteration
\\

\midrule
\cite{diakonikolas2026oracle},
Theorem~13
& $\Omega(\epsilon^{-3})$
& Constant probability
&Lower bound
\\

\bottomrule
\end{tabularx}

\medskip
\begin{minipage}{\textwidth}
\end{minipage}
\end{table*}

\section{Conclusion}

In this work, we studied the convergence of Dual Halpern Iteration with nonexpansive operators in exact (deterministic), inexact, and stochastic setups. We showed that Dual Halpern Iteration shows a robust error bound, particularly compared to Halpern iteration, and achieves optimal oracle complexity in the deterministic setup in normed spaces and in the stochastic oracle setup for $p=\infty$. 
% up to the logarithmic dimension factor in the latter case.

Our work opens the door to several directions for future work. One future research direction is to reduce the logarithmic dimension factor in Theorem~\ref{thm:sdh} or improve the lower bound to match the dimension dependence. Another direction is to extend dual Halpern iteration to a prior-knowledge-free method which does not require knowledge of problem parameters to obtain an $\epsilon$-residual, as in \cite{diakonikolas2026solving, lee2025near}. Extending dual Halpern iteration to a more general Markovian oracle setup \cite{kaya2026banach,lee2026learning} would also be an interesting direction.

Lastly, we highlight that dual Halpern iteration could be broadly applied to various stochastic setups, including reinforcement learning, min-max optimization, and machine learning, as Halpern Iteration has been \cite{lee2021fast, lee2025optimal, cai2024variance, colao2026convergence}. We believe that the robust error bound of dual Halpern iteration provides promising performance for such applications.

% \paragraph{AI Disclosure Statement}

% In this work, the authors used AI tools for language editing, proofreading, and refining calculations. We have not used
% generative AI tools for the development of the core methodology and theoretical results.

\newpage

\bibliographystyle{plain}
% \bibliography{references}
\bibliography{dual_halpern}

@article{lee2026m,
  title={M-duality in fixed-point problems for general normed spaces},
  author={Lee, Jongmin and Woo, Sangyeop and Ryu, Ernest K},
  journal={arXiv preprint arXiv:2609.08151},
  year={2026}
}

@article{bravo2026minimax,
  title={Minimax-optimal Halpern iterations for Lipschitz maps},
  author={Bravo, Mario and Cominetti, Roberto and Lee, Jongmin},
  journal={Mathematical Programming},
  pages={1--32},
  year={2026},
  publisher={Springer}
}

@article{lee2025optimal,
  title={Optimal non-asymptotic rates of value iteration for average-reward markov decision processes},
  author={Lee, Jongmin and Ryu, Ernest},
  journal={International Conference on Learning Representations},
  year={2025}
}

@article{lee2026learning,
  title={Learning Policy from a Single Trajectory in Average-Reward Markov Decision Process},
  author={Lee, Jongmin and Ryu, Ernest K and Aggarwal, Vaneet},
  journal={arXiv preprint arXiv:2606.16729},
  year={2026}
}

@article{sabach2017first,
  title={A first order method for solving convex bilevel optimization problems},
  author={Sabach, Shoham and Shtern, Shimrit},
  journal={SIAM Journal on Optimization},
  volume={27},
  number={2},
  pages={640--660},
  year={2017}
}

@article{cai2022stochastic,
  title={Stochastic Halpern iteration with variance reduction for stochastic monotone inclusions},
  author={Cai, Xufeng and Song, Chaobing and Guzm{\'a}n, Crist{\'o}bal and Diakonikolas, Jelena},
  journal={Neural Information Processing Systems},
  volume={35},
  pages={24766--24779},
  year={2022}
}

@article{lee2021fast,
  title={Fast extra gradient methods for smooth structured nonconvex-nonconcave minimax problems},
  author={Lee, Sucheol and Kim, Donghwan},
  journal={Neural Information Processing Systems},
  volume={34},
  pages={22588--22600},
  year={2021}
}

@article{yoon2026dual,
  title={Dual-Anchor Acceleration Is Near-Optimal for Stochastic Monotone Root-Finding},
  author={Yoon, TaeHo and Loizou, Nicolas},
  journal={arXiv preprint arXiv:2609.36033},
  year={2026}
}

@article{yoon2026direct,
  title={Direct Acceleration of Stochastic Root-Finding Without Variance Reduction and Regularization},
  author={Yoon, TaeHo and Loizou, Nicolas},
  journal={arXiv preprint arXiv:2608.12043},
  year={2026}
}

@article{yoon2026toward,
  title={Toward a Systematic Understanding and Interactive Search of {L}yapunov-Style Proofs in Optimization},
  author={Yoon, TaeHo and Suh, Jaewook J and Nguyen, Edward Duc Hien and Ying, Bicheng and Ma, Shiqian},
  journal={arXiv preprint arXiv:2606.26077},
  year={2026}
}

@article{diakonikolas2022potential,
  title={Potential function-based framework for minimizing gradients in convex and min-max optimization},
  author={Diakonikolas, Jelena and Wang, Puqian},
  journal={SIAM Journal on Optimization},
  volume={32},
  number={3},
  pages={1668--1697},
  year={2022},
  publisher={SIAM}
}

@article{cai2024variance,
  title={Variance reduced halpern iteration for finite-sum monotone inclusions},
  author={Cai, Xufeng and Alacaoglu, Ahmet and Diakonikolas, Jelena},
  journal={International Conference on Learning Representations},
  year={2024}
}

@article{chen2024near,
  title={Near-optimal algorithms for making the gradient small in stochastic minimax optimization},
  author={Chen, Lesi and Luo, Luo},
  journal={Journal of Machine Learning Research},
  volume={25},
  number={387},
  pages={1--44},
  year={2024}
}

@article{
    Lieder2021halpern,
    title={On the convergence rate of the {H}alpern-iteration},
    author={Felix Lieder},
    journal={Optimization Letters},
    volume={15},
    number={2},
    pages={405--418},
    year={2021}
}

@article{halpern1967fixed,
  title={Fixed points of nonexpanding maps},
  author={Halpern, Benjamin},
  journal={Bulletin of the American Mathematical Society},
  volume={73},
  number={6},
  pages={957--961},
  year={1967},
  publisher={American Mathematical Society}
}

@article{park2022exact,
  title={Exact Optimal Accelerated Complexity for Fixed-Point Iterations},
  author={Park, Jisun and Ryu, Ernest K.},
  journal={International Conference on Machine Learning},
  year={2022}
}

@book{ledoux1991probability,
  title={Probability in Banach Spaces: isoperimetry and processes},
  author={Ledoux, Michel and Talagrand, Michel},
  volume={23},
  year={1991},
  publisher={Springer}
}

@article{juditsky2008large,
  title={Large deviations of vector-valued martingales in 2-smooth normed spaces},
  author={Juditsky, Anatoli and Nemirovski, Arkadii S},
  journal={arXiv preprint arXiv:0809.0813},
  year={2008}
}

@article{ball1994sharp,
  title={Sharp uniform convexity and smoothness inequalities for trace norms},
  author={Ball, Keith and Carlen, Eric A and Lieb, Elliott H},
  journal={Inventiones mathematicae},
  volume={115},
  number={1},
  pages={463--482},
  year={1994},
  publisher={Springer}
}

@article{mascioni1988weak,
  author  = {Mascioni, Vania},
  title   = {On Weak Cotype and Weak Type in Banach Spaces},
  journal = {Note di Matematica},
  volume  = {8},
  number  = {1},
  year    = {1988},
  pages   = {67--110}
}

@article{diakonikolas2026solving,
  title={Solving Stochastic Fixed-Point Equations with High Probability},
  author={Diakonikolas, Jelena},
  journal={arXiv preprint arXiv:2607.09097},
  year={2026}
}

@article{colao2026convergence,
  title={On the Convergence of HalpernSGD},
  author={Colao, Vittorio and Foglia, Katherine Rossella},
  journal={arXiv preprint arXiv:2601.18906},
  year={2026}
}

@article{kaya2026banach,
  title={A Banach-Space Theory of Markovian Halpern Iteration for Non-Expansive Maps},
  author={Kaya, Ege C and Fazla, Arda and Sahin, M Berk and Hashemi, Abolfazl},
  journal={arXiv preprint arXiv:2608.15966},
  year={2026}
}

@article{lee2025near,
  title={Near-Optimal Sample Complexity for {MDP}s via Anchoring},
  author={Lee, Jongmin and Bravo, Mario and Cominetti, Roberto},
  journal={International Conference on Machine Learning},
  year={2025}
}

@article{shu2026unified,
  title={A Unified Theory of H-Duality in First-Order Methods},
  author={Shu, Kevin and Wang, Alex L},
  journal={arXiv preprint arXiv:2609.03281},
  year={2026}
}

@article{yoon2024optimal,
  title={Optimal acceleration for minimax and fixed-point problems is not unique},
  author={Yoon, Taeho and Kim, Jaeyeon and Suh, Jaewook J and Ryu, Ernest K},
  journal={Internation conference on machine Learning},
  year={2024}
}

@article{contreras2022optimal,
  title={Optimal error bounds for non-expansive fixed-point iterations in normed spaces},
  author={Contreras, Juan Pablo and Cominetti, Roberto},
  journal={Mathematical Programming},
  volume= {199},
number={1--2},
pages= {343--374},
  year={2022}
}

@article{diakonikolas2026oracle,
  title={Oracle Complexity of Stochastic Fixed-Point Equations with Nonexpansive Maps},
  author={Diakonikolas, Jelena and Guzm{\'a}n, Crist{\'o}bal and Mart{\'\i}nez-Rubio, David},
  journal={arXiv preprint arXiv:2609.09524},
  year={2026}
}

@article{bravo2026stochastic,
  title={Stochastic Halpern iteration in normed spaces and applications to reinforcement learning: M. Bravo, JP Contreras},
  author={Bravo, Mario and Contreras, Juan Pablo},
  journal={Mathematical Programming},
  pages={1--39},
  year={2026},
  publisher={Springer}
}

@article{lee2024accelerating,
  title={Accelerating value iteration with anchoring},
  author={Lee, Jongmin and Ryu, Ernest},
  journal={Neural Information Processing Systems},
  year={2023}
}

\end{document}